\newtheorem{theorem}{Theorem}
\def\det{\mathrm{det}}
\def\Tr{\mathrm{Tr}}
\def\Reals{\mathbb{R}}
\def\cY{\mathcal{Y}}
\def\dim{\mathrm{dim}}
\def\span{\mathrm{Span}}
\def\Null{\mathrm{Null}}
\def\col{\mathrm{col}}
\def\bb{\mathbf{b}}
\def\bg{\mathbf{g}}
\def\bp{\mathbf{p}}
\def\bq{\mathbf{q}}
\def\bs{\mathbf{s}}
\def\bx{\mathbf{x}}
\def\by{\mathbf{y}}
\def\bz{\mathbf{z}}
\def\bA{\mathbf{A}}
\def\bC{\mathbf{C}}
\def\bI{\mathbf{I}}
\def\bM{\mathbf{M}}
\def\bP{\mathbf{P}}
\def\bQ{\mathbf{Q}}
\def\bS{\mathbf{S}}
\def\bzero{\mathbf{0}}
\newtheorem{lemma}{Lemma}
\newtheorem{assumption}{Assumption}
\newtheorem{corollary}{Corollary} 
\newcommand{\replace}[2]{#2} 
\newcommand{\replacemath}[2]{#2} 
\newcounter{banditproblem}
\newenvironment{banditproblem}[1][htb]
  {
   \let\c@algocf\c@banditproblem
   \begin{algorithm2e}[#1]%
  }{\end{algorithm2e}}
\crefname{bp}{Bandit Problem}{Bandit Problems}
\title{Augmenting Subspace Optimization Methods with Linear Bandits}
\author{Matt Menickelly\footnote{Argonne National Laboratory, 9700 S. Cass Ave., Lemont, IL 60439, USA.}\orcidlink{0000-0002-2023-0837}}
\begin{document}

\maketitle

\begin{abstract}
 We consider the framework of methods for unconstrained minimization that are, in each iteration, restricted to a model that is only a valid approximation to the objective function on some affine subspace containing an incumbent point. 
 These methods are of practical interest in computational settings where derivative information is either expensive or impossible to obtain. 
 Recent attention has been paid in the literature to employing randomized matrix sketching for  generating the affine subspaces within this framework. 

We consider a relatively straightforward, deterministic augmentation of such a generic subspace optimization method.
In particular, we consider a sequential optimization framework where actions consist of one-dimensional linear subspaces and rewards consist of (approximations to) the magnitudes of directional derivatives computed in the direction of the action subspace. 
Reward maximization in this context is consistent with maximizing lower bounds on descent guaranteed by first-order Taylor models. 
This sequential optimization problem can be analyzed through the lens of dynamic regret. 
We modify an existing linear upper confidence bound (UCB) bandit method and prove sublinear dynamic regret in the subspace optimization setting.
We demonstrate the efficacy of employing this linear UCB method in a setting where forward-mode algorithmic differentiation can provide directional derivatives in arbitrary directions and in a derivative-free setting. 
For the derivative-free setting, we propose \texttt{SS-POUNDers}, an extension of the derivative-free optimization method \texttt{POUNDers} that employs the linear UCB mechanism to identify promising subspaces. 
Our numerical experiments suggest a preference, in either computational setting, for employing a linear UCB mechanism within a subspace optimization method. 
\end{abstract}

\section{Introduction}
We consider the minimization of 
 an unconstrained objective function $f:\Reals^d\to\Reals$; that is, we mean to solve
 \begin{equation}\label{eq:minf} 
 \displaystyle\min_{\bx\in\Reals^d} f(\bx).
 \end{equation}
\replace{We make the following assumption on $f$, which will hold throughout this manuscript.}{Throughout the manuscript, we make the following assumption on $f$.}

\begin{assumption}\label{ass:f}
The objective function $f$ satisfies the following. 
\begin{itemize}
\item $f:\Reals^d\to\Reals$ is continuously differentiable on a level set $$L(\bx_0):= \{\bz\in\Reals^d: f(\bz)\leq f(\bx_0)\}$$. 
\item $f$ is bounded below on $L(\bx_0)$; that is,  there exists $\bx_*\in\Reals^d$ such that $f(\bx_*)\leq f(\bx)$ for all $\bx\in L(\bx_0)$.
\item $\nabla f$ is globally Lipschitz with constant $L$ on the level set $L(\bx_0)$; that is, 
$$\|\nabla f(\bx)-\nabla f(\by)\|\leq L\|\bx-\by\|$$ for all $\bx,\by \in L(\bx_0)$. 
\end{itemize}
\end{assumption}


We note that \Cref{ass:f} makes no particular assumptions about convexity, which informs our choices of algorithms throughout this manuscript.   
Importantly, \Cref{ass:f} suggests that because gradients are well defined everywhere, first-order optimization methods are appropriate for the identification of first-order stationary points of $f$ in \eqref{eq:minf}. 

In some areas of computational science, the computation of derivatives of objective function $f$ may be prohibitive even when \Cref{ass:f} is satisfied. This is traditionally the scope of derivative-free optimization (DFO) methods; see, for example, \cite{AudetHare2017, Conn2009a, LMW2019AN}. 
Alternatively, for such functions $f$ that are provided as computer codes, algorithmic differentiation (AD) can provide numerically accurate derivative computations. 
A celebrated result (see, e.g., \replace{\cite{Griewank_2003}[Section 3.2]}{\cite[Section 3.2]{Griewank_2003}}) proves that, in the absence of constraints on memory, reverse-mode AD can compute a numerical gradient of a function in a small factor times the wall-clock time of performing a zeroth-order function evaluation. 
Of course, these memory assumptions are unrealistic; and in practice, the computation of derivatives via reverse-mode AD is challenging, if not prohibitive.
For a more thorough discussion of such challenges, see, for example,~\replace{\cite{huckelheim20}[Section 1]}{\cite[Section 1]{huckelheim20}}.
In situations where reverse-mode AD is too difficult, however, forward-mode AD is sometimes tractable. 
Informally, forward-mode AD computes directional derivatives (in potentially arbitrary, that is, not coordinate-aligned, directions\replace{}{; see, for instance the \texttt{jvp} routine of \texttt{JAX}\cite{jax2018github}, which permits arbitrary tangent vectors.}).
The cost of a single directional derivative in this mode is approximately the cost of a single zeroth-order evaluation of $f$.
This is, of course, an informal statement; see \cite{huckelheim20} for practical insight into more precise quantification of the true costs of forward-mode AD. 
Thus, one may view forward-mode AD as an \emph{oracle} for computing directional derivatives (in arbitrary directions) at a  cost similar to that of performing finite difference approximations of derivatives but with generally superior accuracy. 

In computational settings that preclude analytic derivatives and efficient reverse-mode AD, \emph{\replace{}{model-based} subspace optimization methods} are of particular interest for the solution of \eqref{eq:minf}, \replace{}{especially when the dimension $d$ is large,} and have seen recent theoretical growth. 
For the sake of exposition, we broadly define \replace{}{model-based} subspace methods as any iterative method of minimization that can be described by the informal statement provided in Framework~1. 

\begin{banditproblem}[H]
    \caption{Informal definition of a \replace{}{model-based} subspace optimization method \label{bp:krounds}}
         \SetAlgoNlRelativeSize{-4}
         \textbf{Input: } Initial point $\bx_0\in\Reals^d$, number of iterations $K$, function $f$ satisfying \Cref{ass:f}. \\
         \For{$k=0,1,\dots, K-1$}{
         Choose a subspace dimension $p_k\leq d$, and choose a matrix $\bS_k\in\Reals^{d\times p_k}$. \\
         Construct some model, $m_k:\Reals^{p_k}\to\Reals$, such that $m_k(\bz)$ approximates $f(\bx_k + \bS_k \bz)$ for all $\bz\in\Reals^{p_k}$ in a neighborhood of $\bzero\in\Reals^{p_k}$.  \\
         Approximately minimize $m_k$ in a neighborhood of $\bzero\in\Reals^{p_k}$ to obtain $\bz_*\in \Reals^{p_k}$. \\
         Update the incumbent $\bx_{k+1}\gets \bx_k + \bS_k \bz_*$, provided $f(\bx_k + \bS_k \bz_*)$ yields sufficient decrease. \\
         }
\end{banditproblem}

Deterministic \replace{}{model-based} subspace methods have been studied for decades; Krylov methods, and more specifically nonlinear conjugate gradient methods, fit within Framework~1 by implicitly choosing $\bS_k$ to define a (typically) two-dimensional subspace based on past gradient evaluations. 
(L)BFGS methods implicitly generate $\bS_k$ such that the span of the columns of $\bS_k$ is equal to the span of a (limited) memory of gradients and past incumbent/gradient displacements. 
Block coordinate descent methods can be viewed as choosing $\bS_k$ 
as a (potentially deterministic) subset of columns of a $d$-dimensional identity matrix. 
The former two types of subspace methods require the computation of (full-space) gradients, so these methods are not within the scope of our computational setting. 
Block coordinate descent methods do fit in our computational setting but are not the topic of this paper.  

Recent developments have seen the application of \emph{randomized} matrix sketching~\cite{mahoney2011randomized, woodruff2014sketching, martinsson2020randomized, murray2023randomized} as a means to choose $\bS_k$ within algorithms in Framework~1\replace{.}{, effectively choosing $\bS_k$ as a \emph{sketch matrix}, a random matrix governed by a distribution such that statistical guarantees can be made about how $\bS_k$ acts on a vector (here, $\nabla f(\bx_k)$).} 
In a computational setting that permits forward-mode AD, the local model $m_k$ of Framework~1 could be simply a first-order Taylor model in the affine subspace defined by $\bx_k$ and $\bS_k$. Computing $\bS_k^\top\nabla f(\bx_k)$ amounts to computing a (relatively small) number of directional derivatives in the randomized directions given in the columns of $\bS_k$.
In the derivative-free setting, an arbitrary model $m_k$ is constructed in the (relatively small) affine subspace defined by $\bx_k$ and $\bS_k$; $m_k$ might exhibit the property that the zeroth- and first-order errors made by the model are comparable to those made by a first-order Taylor model. 
In either the forward-mode AD setting or the derivative-free setting, and as we will expound upon, the theoretical success of randomized subspace methods hinges upon some notion that the randomly selected subspace in each iteration exhibits, with sufficiently high probability, a sufficiently large normalized inner product with the gradient at the incumbent~\replace{; that is, roughly speaking,}{see, e.g., \cite[Assumption 4]{cartis2022randomised},~\cite[Assumptions 5,6]{cartis2023scalable},~\cite[Assumption 5, Lemma 4]{chen2024q}
,~\cite[Assumptions 2,3]{dzahini2024stochastic}.
All of these works assume, roughly speaking, that with high probability and for some $q\in(0,1)$, that}
\begin{equation}\label{eq:rough_desire}
\replacemath{\|(\bS_k^\top \bS_k)^{-1} \bS_k^\top \nabla f(\bx_k)\| \approx \|\nabla f(\bx_k)\|.}{\displaystyle\frac{\|(\bS_k^\top \bS_k)^{-1} \bS_k^\top \nabla f(\bx_k)\|}{\|\nabla f(\bx_k)\|} > q;}\end{equation}
\replace{}{Here, we have made the expositional choice to assume that $\bS_k$ has full column rank so that a matrix with $p_k$ many columns always defines a $p_k$-dimensional subspace. Hence, $\bS_k^\top\bS_k$ is always invertible, and so the normalization by the matrix inverse in \eqref{eq:rough_desire} is well-defined. 
One may notice that the normalization does not explicitly appear in the previously cited works, but is instead handled by assuming a (probabilistic) bound on $\|\bS_k\|$.}

Providing a guarantee like \eqref{eq:rough_desire} typically relies on some variant of the Johnson--Lindenstrauss lemma \cite{johnson1984extensions}, a surprising result that demonstrates, roughly speaking, that in high dimensions, a low-dimensional subspace defined by $\bS_k$ (with $p_k$ independent of $d$) drawn from an appropriate distribution achieves a property resembling \eqref{eq:rough_desire} with high probability. 

\subsection{Contributions}
In this manuscript we propose a novel \emph{deterministic} method for choosing subspaces $\bS_k$ within subspace optimization methods like those described by Framework~1.  
Instead of directly employing randomized sketches in an effort to (probabilistically) \replace{align the random subspace with a gradient}{achieve a condition like \eqref{eq:rough_desire}}, we demonstrate how \replace{this alignment problem can be described through the lens of}{achieving \eqref{eq:rough_desire} can be performed by} \emph{linear bandit methods} in a nonstationary environment. 
Because \Cref{ass:f} guarantees that the gradient of $f$ is Lipschitz continuous, we treat the incumbent update in a generic subspace optimization method as an exogenous process resulting in bounded drifts in a  gradient. 

More specifically, we adapt a linear upper confidence bound (UCB) bandit method~\cite{auer2002using, abbasi2011improved} to choose each unit-length one-dimensional sketch $\bS_k\replacemath{}{\in\mathbb{R}^d}$ so that $\bS_k$ is maximally aligned with $\nabla f(\bx_k)$; that is, we aim to maximize $|(\bS_k^\top \bS_k)^{-1} \bS_k^\top \nabla f(\bx_k)|$ in every iteration. 
The linear UCB method maintains a confidence ellipsoid describing a belief on the dynamically changing value of $\nabla f(\bx_k)$ by fitting a linear regression model using a history of observed (approximations to) $\bS_k^\top \nabla f(\bx_k)$. 
The maximizer over this confidence ellipsoid of an inner product with a unit-norm vector is chosen on each iteration as the one-dimensional sketch $\bS_k$. 
Such a UCB method ensures a balance between \emph{exploration} of subspaces---the focus of randomized sketching approaches to subspace optimization methods in the literature---and \emph{exploitation} of subspaces that have recently been observed to exhibit large gradients. 
The linear UCB method is introduced and analyzed in \Cref{sec:method}.

In \Cref{sec:method} we also demonstrate sublinear dynamic regret \cite{besbes2014stochastic, besbes2015non} of this linear UCB method in terms of its ability to approximate the dynamically changing gradient $\nabla f(\bx_k)$.  
Because of our choice to view the optimization dynamics as an exogenous process affecting the gradient~$\nabla f(\bx_k)$, 
we stress that this is not a regret bound for the optimization algorithm. 
However, our regret analysis will elucidate how the selection of a particular subspace optimization algorithm will affect the regret incurred by the linear UCB method. 
As an illustration, we will consider perhaps the simplest possible optimization algorithm that could fit into Framework~1---a subspace variant of \replace{fixed stepsize gradient descent that employs randomized sketching}{gradient descent with a backtracking line search}---to make these connections clear. 
In particular, given a sequence \replace{of matrices $\{\bS_k\}$}{$\{\bS_k\}$ of full column rank matrices}, define the associated sequence of embedding matrices~$\{\bP_k := \bS_k (\bS_k^\top \bS_k)^{-1}\bS_k^\top\} \subset \Reals^{d\times d}$\replace{, and let $\{\alpha_k\}$ denote a strictly positive sequence of stepsizes.
With these choices made, we define a \emph{subspace gradient descent method} via the iteration}{. Then, we consider the following algorithm: }


\begin{algorithm2e}[H]
 \SetAlgoNlRelativeSize{-4}

 \textbf{(Inputs)} 
 Initial point $\bx_1$,
 horizon $K$.
   
\textbf{(Initializations)} 
Backtracking parameter $\beta\in(\frac{1}{2},1)$.

\For{$k=1,2,\dots, K$}{
\textbf{(Obtain (random) sketch)} 

Receive $\bS_k \in\Reals^{d\times p_k}$ and call oracle to compute $\bS_k^\top \nabla f(\bx_k)$. \label{line:call_oracle1}

\textbf{(Determine next incumbent by linesearch)}\label{line:stepsize_adjust}

$\alpha \gets 1.$

$\bx_{+}\gets \bx_k - \alpha \bP_k \nabla f(\bx_k)$.

\While{$f(\bx_{+}) \geq f(\bx_k) - \alpha\|\bP_k\nabla f(\bx_k)\|^2$ \label{line:ls_while}} 
{

$\alpha\gets \beta\alpha$

$\bx_{+}\gets \bx_k - \alpha \bP_k \nabla f(\bx_k)$.
}

$\bx_{k+1} \gets \bx_{+}.$
} 
\caption{\label{alg:linesearch} Subspace Gradient Descent with Backtracking Line Search. }
\end{algorithm2e}
\color{black} 
\replace{}{\Cref{alg:linesearch} is among the simplest algorithms that could fit into Framework~1 because the subspace model $m_k$ employed in each iteration is a first-order Taylor model restricted to the subspace defined by $\bS_k$, and a quantifiable notion of sufficient decrease from $m_k$ is classically guaranteed by the backtracking line search for $f$ satisfying \Cref{ass:f}.}

We are particularly concerned in this manuscript with practicalities; and so, motivated by various pieces of the analysis in \Cref{sec:method}, we propose practical extensions in \Cref{sec:considerations}. 
We conclude this paper by proposing and testing our linear UCB mechanism in \replace{an algorithm that can be viewed as a specific instance of a quadratic regularization method with randomized sketching proposed in \cite{cartis2022randomised}}{the practical variant of \Cref{alg:linesearch}} in a setting where we simulate the computation of forward-mode AD gradients\replace{; the method is essentially that described by (3)  but with a dynamically adjusted stepsize $\alpha_k$.}{.}
In a derivative-free setting, we propose a simple extension of the DFO method \texttt{POUNDers} \cite{SWCHAP14} that employs our linear UCB mechanism to select subspaces.

\subsection{Related Work}
The theoretical and practical development of methods that have the general form of Framework~1 has enjoyed much recent attention.
Matrix sketching \cite{mahoney2011randomized, woodruff2014sketching, martinsson2020randomized, murray2023randomized}, which entails the random generation of $\bS_k$ in Framework~1 from some distribution on matrices, has emerged as a theoretical tool for analyzing such methods, often supported by the powerful Johnson--Lindenstrauss lemma \cite{johnson1984extensions}. 
In \cite{kozak2019stochastic} the authors propose a method in Framework~1 that looks much like \replace{the iteration (3)}{the subspace gradient descent method in \Cref{alg:linesearch}, but with a fixed step size}.
The authors prove convergence of the method under standard assumptions about convexity of $f$;
in a follow-up paper~\cite{kozak2021stochastic}, the same authors considered the same iteration but with $\nabla f(\bx_k)$ approximated by finite differences. 
The works \cite{gower2019rsn, yuan2022sketched, fuji2022randomized} and
\cite{hanzely2020stochastic, zhao2024cubic} propose and analyze randomized Newton methods and stochastic subspace cubic Newton methods, respectively, that extend \replace{(3)}{a subspace gradient descent iteration} by also using second-derivative information projected onto subspaces defined by $S_k$. 
Pilanci and Wainwright~\cite{pilanci2017newton} also similarly consider randomized Newton methods in the case where the objective Hessian admits a matrix square root everywhere on its domain. 
Cartis et al.~\cite{cartis2022randomised} consider imposing various globalization strategies, such as trust-region and quadratic regularization, into algorithms that fall into Framework~1; the authors also specialize their methods to nonlinear least-squares objectives. Miyashi et al.~\cite{miyaishi2024subspace} consider a quasi-Newton approximation of projected Hessian information to be used with projected gradients~$\bS_k^\top\nabla f(\bx_k)$, and Nozawa et al.~\cite{nozawa2023randomized} consider variants of \replace{(3)}{the subspace gradient descent iteration} for problems with nonlinear inequality constraints. 
Grishchenko et al.~\cite{grishchenko2021proximal} effectively consider the \replace{iteration (3)}{subspace gradient descent iteration} within a proximal point framework to handle an additional convex nonsmooth term in \eqref{eq:minf}. 

Although not precisely based on sketching, many randomized algorithms have been proposed based on a related idea thoroughly expounded in \cite{Nesterov2015} that was introduced as early as in \replace{\cite{polyakbook}[Chapter 3.4]}{\cite[Chapter 3.4]{polyakbook}}. 
In such algorithms, one effectively draws one-dimensional sketches $\bS_k\in\Reals^d$ from a zero-mean, typically Gaussian, distribution with arbitrary covariance and then computes an unbiased estimator \replace{of~$\nabla f(\bx_k)$,~$(\nabla\bS_k^\top \nabla f(\bx_k))\bS_k$}{$\bS_k^\top \nabla f(\bx_k))\bS_k$ of $\nabla f(\bx_k)$}. 
Randomized methods depending on a forward-mode AD oracle for computing $\bS_k^\top \nabla f(\bx_k)$ in this unbiased estimator have also attracted recent attention in the AD community \cite{baydin2022gradients, shukla2023randomized}, with a particular interest in application to fine-tuning of large language models \cite{malladi2023fine, zhang2024revisiting}.

\replace{S}{Model-based s}ubspace methods based on sketching have also inspired research in DFO. 
If $f$ is not readily amenable to AD, then one can approximate a given $\bS_k^\top \nabla f(\bx_k)$ by finite differencing at the cost of $\mathcal{O}(p_k)$ many function evaluations. 
This idea of approximating local projected gradient information lends itself naturally to model-based DFO methods, and this idea has been explored thoroughly in \cite{cartis2023scalable, cartis2024randomized} and the previously mentioned \cite{kozak2021stochastic}.  
Related analyses of subspace model-based DFO were performed in \cite{chen2024q, hare2025expected}.
Subspace model-based DFO was extended to stochastic objectives in \cite{dzahini2024stochastic}\replace{.}{and to more general noisy objectives in \cite{dzahini2025noise}.} 
\replace{}{Although not model-based methods, randomized subspace techniques have also been explored in the context of direct-search methods, another well-studied class of methods in DFO \cite{roberts2023direct, dzahini2024direct}.}
We additionally mention that Nesterov and Spokoiny~\cite{Nesterov2015} also introduced a finite-difference-based (and hence biased) estimator for the derivative-free setting, which has  inspired much  research; see \cite{berahas2022theoretical} and references therein.

While the works cited so far concern randomized methods (i.e., $\bS_k$ is drawn from some distribution), we remind the reader that our broad definition in Framework~1 also encompasses deterministic methods. 
Block-coordinate descent methods are widely found in the literature  (see in particular \cite{nesterov2012efficiency, wright2015coordinate, richtarik2014iteration}) and need not be randomized. 
A vast literature on conjugate gradient methods also exists; but we draw special attention to \cite{yuan1995subspace}, which proposed a method that iteratively minimized a model of the objective restricted to a two-dimensional search space defined by the span of $\nabla f(\bx_k)$ and the most recent displacement vector between incumbents. 
In our notation, this is tantamount to including $\nabla f(\bx_k)$ as a column of $\bS_k$. 
Obviously, proposing to use such a sequence of $\{\bS_k\}$ in our setting, where we assume the full computation of $\nabla f(\bx_k)$ is expensive, is nonsensical but would  satisfy \eqref{eq:rough_desire}, which is the overarching goal of the UCB method we propose using in this manuscript. 
The identification of many well-known minimization methods effectively amounting to iterative minimization of local models along gradient-aligned subspaces is the subject of \cite{yuan2007subspace, yuan2014review}; this idea has permeated much development across the field. 
We also note that the Ph.D. thesis \cite{zhangthesis} considered, essentially, a derivative-free variant of the method of \cite{yuan1995subspace} that would periodically recompute a new approximation to $\nabla f(\bx_k)$ for use in determining a subspace. 

We comment that the linear UCB mechanism we propose in this manuscript is meant to \emph{locally} predict subspaces exhibiting the most variation, which are, in many applications, likely to change as the incumbent $\bx_k$ changes\replace{.}{; as will be seen in \Cref{alg:ss_ucb}, an employed memory parameter $M$ will guarantee that the UCB method only retains the $M$ most recent sketched gradients in memory, justifying this locality.} 
We must mention that for problems that exhibit \emph{global} subspaces of variation, there has been much research in identifying such so-called active subspaces \cite{Constantine2015}. 
In fact, some recent work has made explicit connections between sketching algorithms and identifying such global active subspaces \cite{cartis2023global, cartis2024learning}.
While our linear UCB mechanism is intuitively also likely to capture such low effective dimensionality, provided it exists for a given instance of $f$ in \eqref{eq:minf}, our linear UCB mechanism is intended to identify, dynamically, \emph{local} subspaces of variation. 
Nonetheless, in our numerical experiments we will test the hypothesis that the linear UCB mechanism should identify active subspaces. 

\section{Preliminaries}
To motivate our development, 
we begin with a lemma \replace{concerning a subspace gradient descent method defined via the iteration (3)}{guaranteeing the sufficient decrease attained in each iteration of \Cref{alg:linesearch}.} 

\begin{lemma}\label{lem:bounds_on_decrease}
Let \Cref{ass:f} hold, in particular let $L$ be a global Lipschitz constant for $\nabla f$. 
    \replace{}{Let a sequence of full column rank matrices $\{\bS_k\}$ be given.}
    For all $k=0,1,\dots, K$, the sequence of $\{\bx_k\}$ generated by \replace{the iteration (3)}{\Cref{alg:linesearch}} satisfies
    $$f(\bx_k) - f(\bx_{k+1})  \geq \replacemath{\frac{1}{2L}}{\frac{2\beta - 1}{2L}}\|(\bS_k^\top \bS_k)^{-\frac{1}{2}} \bS_k^\top \nabla f(\bx_k)\|^2, $$
  where $\beta$ is the backtracking parameter from \Cref{alg:linesearch}.
\end{lemma}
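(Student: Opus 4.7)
The plan is to analyze the backtracking line search in \Cref{alg:linesearch} by combining the descent lemma (available because $\nabla f$ is $L$-Lipschitz on $L(\bx_0)$ by \Cref{ass:f}) with the structure of the sufficient-decrease test and the geometric backtracking schedule.

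First, I would record the key identity coming from the fact that $\bP_k = \bS_k(\bS_k^\top \bS_k)^{-1}\bS_k^\top$ is an orthogonal projector (symmetric and idempotent):
\[
\|\bP_k \nabla f(\bx_k)\|^2 = \nabla f(\bx_k)^\top \bP_k \nabla f(\bx_k) = \|(\bS_k^\top \bS_k)^{-1/2}\bS_k^\top \nabla f(\bx_k)\|^2.
\]
This rewrites the right-hand side of the claim purely in terms of $\|\bP_k \nabla f(\bx_k)\|^2$, which is the natural quantity to track along the search direction $-\bP_k \nabla f(\bx_k)$.

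Next, I would apply the descent lemma along this direction to obtain, for every $\alpha>0$,
\[
f(\bx_k - \alpha \bP_k \nabla f(\bx_k)) \leq f(\bx_k) - \alpha \|\bP_k \nabla f(\bx_k)\|^2 + \tfrac{L \alpha^2}{2}\|\bP_k \nabla f(\bx_k)\|^2.
\]
This quadratic-in-$\alpha$ upper bound has two uses: (i) it certifies that the while-loop condition on line~\ref{line:ls_while} is eventually violated, so that the line search terminates, and (ii) combined with the fact that the \emph{preceding} trial $\alpha_k/\beta$ did \emph{not} exit the loop, it yields a quantitative lower bound on the accepted step size $\alpha_k$. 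I would split into the cases $\alpha_k = 1$ (initial trial accepted, in which case the exit condition alone already implies a decrease of at least $\|\bP_k \nabla f(\bx_k)\|^2$, dominating the target) and $\alpha_k = \beta^j$ for some $j\ge 1$ (substituting the descent-lemma upper bound at $\alpha_k/\beta$ into the ``while-loop stayed true at the previous trial'' inequality should give a bound of the form $\alpha_k \gtrsim 1/L$). Plugging this lower bound on $\alpha_k$ back into the exit inequality $f(\bx_{k+1}) < f(\bx_k) - \alpha_k \|\bP_k \nabla f(\bx_k)\|^2$ should produce the stated coefficient $\tfrac{2\beta-1}{2L}$.

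The main obstacle I anticipate is obtaining precisely the constant $\tfrac{2\beta-1}{2L}$. Textbook Armijo/backtracking analyses typically produce constants of the form $\tfrac{c\beta(1-c)}{L}$, where $c$ is the sufficient-decrease parameter; landing on $\tfrac{2\beta-1}{2L}$ instead requires a careful pairing of the ``previous trial failed'' inequality with the descent-lemma upper bound at the same step length $\alpha_k/\beta$, together with attention to the assumption $\beta \in (\tfrac12,1)$ (which is exactly what makes $2\beta-1$ positive). Most of the work in the proof will thus be bookkeeping constants so that this interaction yields $(2\beta-1)/(2L)$ cleanly, rather than a slightly weaker bound.
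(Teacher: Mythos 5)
There is a genuine gap at the heart of your plan. The mechanism you propose for lower-bounding the accepted step---pairing the descent lemma at $\alpha_k/\beta$ with the fact that the trial $\alpha_k/\beta$ did \emph{not} exit the loop---produces nothing here, because the test in \Cref{alg:linesearch} uses the full term $\alpha\|\bP_k\nabla f(\bx_k)\|^2$ rather than $c\,\alpha\|\bP_k\nabla f(\bx_k)\|^2$ with $c<1$. Writing the pairing out: the failed trial gives $f(\bx_k-(\alpha_k/\beta)\bP_k\nabla f(\bx_k)) \ge f(\bx_k) - (\alpha_k/\beta)\|\bP_k\nabla f(\bx_k)\|^2$, while the descent lemma bounds the same quantity above by the same right-hand side plus $\tfrac{L}{2}(\alpha_k/\beta)^2\|\bP_k\nabla f(\bx_k)\|^2$; combining them yields only $0 \le \tfrac{L}{2}(\alpha_k/\beta)^2\|\bP_k\nabla f(\bx_k)\|^2$. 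This is exactly the degeneration at $c=1$ of the textbook constant $c\beta(1-c)/L$ you cite, so no amount of constant bookkeeping recovers $\alpha_k \gtrsim 1/L$ from this route. Relatedly, your $\alpha_k=1$ case is not unconditional: a decrease of $\|\bP_k\nabla f(\bx_k)\|^2$ dominates the target $\tfrac{2\beta-1}{2L}\|\bP_k\nabla f(\bx_k)\|^2$ only when $L\ge\beta-\tfrac12$.

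The paper's proof takes a different route that never invokes the failed-trial inequality. It uses the purely scheduling fact that geometric backtracking runs until, at the latest, $\alpha$ first enters the window $(\beta/L, 1/L]$. In that terminal case the decrease is read off from the descent lemma evaluated at the \emph{accepted} step itself: the linear term is bounded using $\alpha_k > \beta/L$ and the quadratic term using $\alpha_k \le 1/L$, giving $-\tfrac{\beta}{L}\|\bP_k\nabla f(\bx_k)\|^2 + \tfrac{1}{2L}\|\bP_k\nabla f(\bx_k)\|^2 = -\tfrac{2\beta-1}{2L}\|\bP_k\nabla f(\bx_k)\|^2$, which is precisely where $\beta>\tfrac12$ enters. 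If instead the loop exits earlier, then $\alpha_k > \beta/L > \tfrac{2\beta-1}{2L}$ and the exit test itself certifies the claimed decrease. Your opening identity $\|\bP_k\nabla f(\bx_k)\|^2 = \|(\bS_k^\top\bS_k)^{-1/2}\bS_k^\top\nabla f(\bx_k)\|^2$ and your use of the descent lemma are both correct and match the paper; what must change is that the step-size lower bound has to come from this windowing argument rather than from the Armijo pairing.
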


\begin{proof}
    \replace{}{Suppose the while loop beginning in \Cref{line:ls_while} of \Cref{alg:linesearch}  runs through~$N:=~\lceil \log_{\beta}(\min\{\beta, 1/L\})\rceil$ many iterations so that $\alpha_k \in (\beta/L, 1/L]$.}
    By standard Taylor error arguments and using \Cref{ass:f}, we have
    $$
    \begin{array}{rl} f(\bx_{k+1}) & \leq 
    f(\bx_k) + \langle \nabla f(\bx_k), \bx_{k+1}- \bx_k\rangle + \frac{L}{2}\|\bx_{k+1}-\bx_k\|^2\\
    & = 
    f(\bx_k) + \langle \nabla f(\bx_k), -\replacemath{\frac{1}{L}}{\alpha_k} \bP_k\nabla f(\bx_k)\rangle + \frac{L}{2}\|-\replacemath{\frac{1}{L}}{\alpha_k}P_k\nabla f(\bx_k)\|^2\\
    & = 
    f(\bx_k) - \replacemath{\frac{1}{L}}{\alpha_k} \nabla f(\bx_k)^\top \bS_k (\bS_k^\top \bS_k)^{-1} \bS_k^\top \nabla f(\bx_k) \\
    & + \replacemath{\frac{1}{2L}}{\frac{L\alpha_k^2}{2}}\nabla f(\bx_k)^\top \bS_k (\bS_k^\top \bS_k)^{-1}\bS_k^\top \bS_k (\bS_k^\top \bS_k)^{-1}\bS_k^\top \nabla f(\bx_k)\\
    & 
    \leq
    f(\bx_k) - \frac{\beta}{L} 
    \nabla f(\bx_k)^\top \bS_k (\bS_k^\top \bS_k)^{-1} \bS_k^\top \nabla f(\bx_k) \\
    & 
    + \frac{1}{2L}\nabla f(\bx_k)^\top \bS_k (\bS_k^\top \bS_k)^{-1}\bS_k^\top \bS_k (\bS_k^\top \bS_k)^{-1}\bS_k^\top \nabla f(\bx_k)
    \\
    \color{black}
    & = 
    f(\bx_k) - \replacemath{\frac{1}{2L}}{\frac{2\beta - 1}{2L}}  \nabla f(\bx_k)^\top \bS_k (\bS_k^\top \bS_k)^{-1} \bS_k^\top \nabla f(\bx_k)\\
    & = 
    f(\bx_k) - \replacemath{\frac{1}{2L}}{\frac{2\beta - 1}{2L}}\|(\bS_k^\top \bS_k)^{-\frac{1}{2}} \bS_k^\top \nabla f(\bx_k)\|^2\replacemath{,}{}
    \end{array}
    $$
    \replace{as intended.}{
    Thus, if the while loop of \Cref{alg:linesearch} terminates before $N$ iterations, then~$\alpha_k > \beta/L > (2\beta - 1)/2L$, and so 
    $$f(\bx_{k+1}) \leq f(\bx_k) - \alpha_k\|\bP_k\nabla f(\bx_k)\|^2 \leq f(\bx_k)-\frac{2\beta - 1}{2L}\|\bP_k\nabla f(\bx_k)\|^2,$$
    proving the claim. 
    } 
\end{proof}

The next lemma is not about any particular optimization method but is an observation about a general $\bS_k$. 
\replace{}{In particular, \Cref{lem:preserved} demonstrates the intuitive result that the norm of any projection of $\nabla f(\bx_k)$ onto $\bS_k$ is necessarily upper bounded by the norm of $\nabla f(\bx_k)$, and equality can only be attained if $\nabla f(\bx_k)$ lies in the span of~$\bS_k$.}

\begin{lemma}\label{lem:preserved}
    For any $\bS_k\in\Reals^{d\times p_k}$ with full column rank, 
    $$\|(\bS_k^\top \bS_k)^{-\frac{1}{2}} \bS_k^\top \nabla f(\bx_k)\|^2 \leq \|\nabla f(\bx_k)\|^2,$$
    with equality attained if $\nabla f(\bx_k)\in\span(\bS_k)$. 
    Here, we say $\bz\in\span(\bS_k)$ provided there exists $\by\in\Reals^{p_k}$ such that $\bS_k \by = \bz.$
\end{lemma}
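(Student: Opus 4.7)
The plan is to recognize the left-hand side as the squared norm of an orthogonal projection of $\nabla f(\bx_k)$ onto the column span of $\bS_k$, and then invoke the standard fact that orthogonal projections are contractions, with equality only on their range.

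Concretely, I would first expand the squared norm as a quadratic form:
$$\|(\bS_k^\top \bS_k)^{-\frac{1}{2}} \bS_k^\top \nabla f(\bx_k)\|^2 = \nabla f(\bx_k)^\top \bS_k (\bS_k^\top \bS_k)^{-1}\bS_k^\top \nabla f(\bx_k) = \nabla f(\bx_k)^\top \bP_k \nabla f(\bx_k),$$
where $\bP_k := \bS_k (\bS_k^\top \bS_k)^{-1}\bS_k^\top$ is well-defined because $\bS_k$ has full column rank. Next, I would verify the two defining properties of an orthogonal projector, namely $\bP_k^\top = \bP_k$ and $\bP_k^2 = \bP_k$, both of which follow by direct computation. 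Consequently $\nabla f(\bx_k)^\top \bP_k\nabla f(\bx_k) = \|\bP_k \nabla f(\bx_k)\|^2$.

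Writing the orthogonal decomposition $\nabla f(\bx_k) = \bP_k \nabla f(\bx_k) + (\bI - \bP_k)\nabla f(\bx_k)$, where the two summands are orthogonal (since $\bP_k(\bI - \bP_k) = \zero$), the Pythagorean identity gives
$$\|\nabla f(\bx_k)\|^2 = \|\bP_k \nabla f(\bx_k)\|^2 + \|(\bI - \bP_k)\nabla f(\bx_k)\|^2 \geq \|\bP_k \nabla f(\bx_k)\|^2,$$
which is the desired inequality. For the equality condition, if $\nabla f(\bx_k) \in \span(\bS_k)$ then $\nabla f(\bx_k) = \bS_k \by$ for some $\by$, and a direct substitution yields $\bP_k \bS_k \by = \bS_k (\bS_k^\top\bS_k)^{-1}\bS_k^\top \bS_k \by = \bS_k \by = \nabla f(\bx_k)$, so the residual $(\bI - \bP_k)\nabla f(\bx_k)$ vanishes and equality holds.

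There is no real obstacle here — this is a textbook property of orthogonal projectors — so the only thing to be careful about is bookkeeping: confirming that $(\bS_k^\top \bS_k)^{-\frac{1}{2}}$ is well-defined (which needs full column rank) and that the algebraic manipulation from the square-root form to the projector form is valid. Both are immediate under the stated full column rank hypothesis.
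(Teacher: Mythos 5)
Your proposal is correct and follows essentially the same route as the paper: both arguments rest on $\bP_k = \bS_k(\bS_k^\top\bS_k)^{-1}\bS_k^\top$ being an orthogonal projector, with your Pythagorean decomposition being equivalent to the paper's observation that $\bI_d - \bP_k \succeq 0$. The one small difference is in the equality case, where the paper performs a without-loss-of-generality change of basis for the column space of $\bS_k$ and computes directly, whereas you simply substitute $\nabla f(\bx_k) = \bS_k\by$ to get $\bP_k\nabla f(\bx_k) = \nabla f(\bx_k)$ — your version is slightly cleaner since it avoids having to justify that the WLOG leaves $\bP_k$ unchanged.
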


\begin{proof}
    Because $\bP_k=\bS_k(\bS_k^\top \bS_k)^{-1}\bS_k^\top$ is a projection matrix, its eigenvalues are all either 0 or 1. 
    Therefore, 
    $\bI_d - \bS_k(\bS_k^\top \bS_k)^{-1}\bS_k^\top \succeq 0$,
    and so 
    $$
\|\nabla f(\bx_k)\|^2 - \|(\bS_k^\top \bS_k)^{-\frac{1}{2}} \bS_k^\top \nabla f(\bx_k)\|^2 = 
\nabla f(\bx_k)^\top (\bI_d - \bS_k(\bS_k^\top \bS_k)^{-1}\bS_k^\top)\nabla f(\bx_k) \geq 0, $$
proving the inequality. 

Now, suppose $\nabla f(\bx_k)\in \span(\bS_k)$. 
Then, without loss of generality, we may assume that the columns of $\bS_k$ are $\nabla f(\bx_k)/\|\nabla f(\bx_k)\|$ and a set of $p_k-1$ unit vectors in $\Reals^d$, mutually orthogonal with $\nabla f(\bx_k)$. 
Then,
$$\|(\bS_k^\top \bS_k)^{-\frac{1}{2}} \bS_k^\top \nabla f(\bx_k)\|^2
= 
\|\bS_k^\top \nabla f(\bx_k)\|^2
= 
\left| \displaystyle\frac{\nabla f(\bx_k)}{\|\nabla f(\bx_k)\|}^\top\nabla f(\bx_k)\right|^2
= \|\nabla f(\bx_k)\|^2. 
$$
\end{proof}

Combining \Cref{lem:bounds_on_decrease} and \Cref{lem:preserved}, 
we see that the lower bound on \replace{}{per-iteration} function decrease guaranteed by \replace{(3)}{\Cref{alg:linesearch}} proven in \Cref{lem:bounds_on_decrease} is maximized when $\bS_k$ is \emph{well-aligned} with $\nabla f(\bx_k)$ in the sense that \eqref{eq:rough_desire} is satisfied, that is,~$\|(\bS_k^\top \bS_k)^{-\frac{1}{2}}\bS_k^\top \nabla f(\bx_k)\|^2 \approx \|\nabla f(\bx_k)\|^2$. 
It is precisely this sort of alignment that appears in multiple analyses of methods in Framework~1. 
In fact, \cite{cartis2022randomised, cartis2023scalable, dzahini2024stochastic} all impose versions of the following two assumptions, which capture precisely this intuition of seeking to minimize the lower bound in \Cref{lem:bounds_on_decrease}. 

\begin{assumption}\label{ass:sketch}
Let $\replacemath{\delta > 0}{\delta \in [0,1)}$ and $\replacemath{q>0}{q\in(0,1)}$ be given. 
The sequence $\{\bS_k\}$ is generated in such a way that in the $k$th iteration, the inequality
\begin{equation}\label{eq:well-aligned}
\|\bS_k^\top \nabla f(\bx_k)\|^2 \geq q\|\nabla f(\bx_k)\|^2
\end{equation}
holds with probability at least $1-\delta$. 
When the event \eqref{eq:well-aligned} is realized, we say that \emph{$\bS_k$ is $q$-well-aligned with $\nabla f(\bx_k)$}. 
\end{assumption}

\begin{assumption}\label{ass:sketch2}
There exists $S_{\max}>0$ such that the largest eigenvalue of~$\bS_k^\top \bS_k$,~$\lambda_{\max}(\bS_k^\top \bS_k)$, satisfies $\lambda_{\max}(\bS_k^\top \bS_k) > S_{\max}$ for all $k$. 
\end{assumption}

\Cref{ass:sketch} has a clear relationship with subspace embedding in randomized linear algebra \cite{mahoney2011randomized,woodruff2014sketching, martinsson2020randomized, murray2023randomized}. 
We recall here that there exist multiple sketching procedures capable of satisfying \Cref{ass:sketch}, and we outline three such means here. 
\begin{enumerate}
\item $\bS_k$ is a \emph{Gaussian matrix}; in other words,  each entry of $\bS_k$ is independently drawn from $\mathcal{N}(0,1/p_k)$. 
In this case, given both $\delta$ and $q$, \Cref{ass:sketch} will hold provided $p_k\in\Omega\left(\frac{|\log(\delta)|}{(1-q)^2}\right)$ \footnote{Throughout this manuscript, we will define $\Omega(\cdot)$, $\Theta(\cdot)$, and $\mathcal{O}(\cdot)$, as in \cite{knuth1976big}.} 
One can use matrix Bernstein inequality to derive high probability bounds to put $\lambda_{\max}(\bS_k^\top \bS_k)\in\mathcal{O}(d/p_k)$. 
Thus, while Gaussian sampling cannot guarantee that \Cref{ass:sketch2} holds, we can claim via an union bound over all $K$ matrices that \Cref{ass:sketch2} will hold with high probability. 
For more details on these fairly well-known results, see, for example, \replace{\cite{martinsson2020randomized}[Section 8.6]}{\cite[Section 8.6]{martinsson2020randomized}}. 

\item $\bS_k$ is an \emph{$h_k$-hashing matrix}; in other words, \replace{$\bS_k$ has exactly $h_k$ nonzero entries in each column, with the indices sampled independently and}{each column of $\bS_k$ is instantiated as a vector of zeros, and then exactly $h_k$ entries are chosen at random without replacement. Then, }, each nonzero entry \replace{has}{is assigned} value $\pm 1/\sqrt{h_k}$, each value being selected independently with probability $1/2$. 
In this case, given $\delta$ and $q$, then provided $h_k\in\Theta\left(\frac{|\log(\delta)|}{(1-q)}\right)$ and $p_k\in\Omega\left(\frac{|\log(\delta)|}{(1-q)^2}\right)$, \Cref{ass:sketch} is satisfied \cite{kane2014sparser}.
Coarsely, we can bound 
$\|\bS_k\|\leq\|\bS_k\|_F=\sqrt{d}$, and so 
\Cref{ass:sketch2} is also satisfied with $S_{\max} = d$.

\item $\bS_k$ is the \emph{first $p_k$ columns of a Haar-distributed orthogonal matrix}, scaled by a factor $\sqrt{d/p_k}$.
In this case, given $p_k$ and $q$, 
$\bS_k$ will satisfy \Cref{ass:sketch} provided~$\delta$ is \replace{the value of the cumulative distribution function of a beta distribution with shape parameters $\alpha=p_k/2$ and $\beta=(d-p_k)/2$ evaluated at $q^2p_k/d$}{chosen as $B_{p_k/2, (d-p_k)/2}(q^2p_k/d)$, where $B_{\alpha,\beta}$ denotes the cumulative distribution function of the beta distribution with shape parameters $\alpha$ and $\beta$}; see \cite{kozak2021stochastic}. 
We also immediately obtain that \Cref{ass:sketch2} is satisfied with $S_{\max} = d/p_k$. 
\end{enumerate}
Remarkably, all three of these sketching techniques superficially allow for the subspace dimension $p_k$ to be independent of $d$. This dimension independence is less obvious with the Haar distribution \replace{}{(dimension independence effectively means~$B_{p_k/2, (d-p_k)/2}(q^2p_k/d)\in\Theta(1)$ as a function of $d$)}, but numerical evidence suggests this is the case \cite{kozak2021stochastic}. \replace{}{Dimension independence may instead be directly provable from, e.g.,~\cite[Lemma 6.1]{meckes2019random}.} 

We will later impose \Cref{ass:sketch} and \Cref{ass:sketch2} as minimal assumptions on a random sequence of sketches $\{\bS_k\}$ when considering the specific use of \replace{the subspace gradient descent method (3)}{\Cref{alg:linesearch}} as an optimization algorithm within Framework~1. 
With the goal in mind of satisfying something resembling \eqref{eq:rough_desire} on every iteration of an arbitrary algorithm in Framework~1, we now state our bandit model in Framework~2. 

\begin{banditproblem}[H]
    \caption{Description of linear bandit model\label{bp:krounds}}
         \SetAlgoNlRelativeSize{-4}
         \textbf{Input: } Initial point $\bx_0$, number of rounds $K$, function $f$ satisfying \Cref{ass:f}. \\
         \For{$k=0,1,\dots, K-1$}{
         \textbf{Endogenous Action: } Decision-maker chooses $\bs_k\in\Reals^d$. \\
         \textbf{Exogenous Action: } Environment chooses $p_k\in[0,d]$ and $\bS_k\in\Reals^{d\times p_k}.$ \\
         \textbf{Feedback: } Decision-maker receives (an approximation of) $[\bS_k,  \bs_k]^\top \nabla f(\bx_k)$. \\
         \textbf{State change: } $\bx_k$ is updated according to an \replace{}{iteration of an} algorithm in Framework~1, e.g., \replace{(3)}{an iteration of \Cref{alg:linesearch}} \\
         }
\end{banditproblem}

Framework~2 defines a game consisting of $K$ rounds. 
In the $k$th round of the game, 
the decision-maker must select one linear measurement $\bs_k\in\Reals^d$; we refer to this one-dimensional measurement $\bs_k$ as the decision-maker's \emph{action} from the action space $\Reals^d$. 
The decision-maker then receives directional derivatives in the directions defined by~$\bS_k$ and the selected $\bs_k$; we assume the $p_k$ columns of $\bS_k$
are exogenously determined, for example, by a randomized sketching method. 
The environment then drifts slightly, in that the $k$th iteration of an optimization algorithm, for example, \replace{(3)}{\Cref{alg:linesearch}}, is performed. 

Because $\bx_k$, the state, changes as a result of performing each iteration of an optimization algorithm from Framework~1, the environment is nonstationary. 
Moreover, because we intend to solve an optimization problem \eqref{eq:minf} only once, it is inappropriate to view Framework~2 as a setting amenable to reinforcement learning; however, we do remark that if one is solving many similar optimization problems \eqref{eq:minf} via subspace optimization methods, learning a policy for playing Framework~2 could be interesting, but it is not the subject of this manuscript. 
As a result, it is most appropriate to analyze any method for playing the game in Framework~2 from the perspective of \emph{dynamic regret} \cite{besbes2014stochastic, besbes2015non}.
We are particularly interested in  dynamic regret in \emph{slowly varying} or \emph{drifting} environments \cite{cheung2019learning, cheung2022hedging}. 
We say the environment is slowly varying because the reward function changes continuously with $\bx_k$ due to \Cref{ass:f}, and the state change is defined by an optimization iteration that seeks to asymptotically steer $\nabla f(\bx_k)\to \bzero$. 

We now describe dynamic regret.
We have already specified the actions in the linear bandit problem in Framework~2.
Based on our discussion around \Cref{lem:bounds_on_decrease}, a natural choice of \emph{instantaneous regret} $R_k$ in the $k$th round of Framework~2 is 
\begin{equation}\label{eq:instantaneous_regret_pre}
R_k = \displaystyle\max_{\bS\in\Reals^{d\times p_k}} 
\|(\bS^\top \bS)^{-\frac{1}{2}}\bS^\top\nabla f(\bx_k)\| - \|(\bS_k^\top \bS_k)^{-\frac{1}{2}}\bS_k^\top\nabla f(\bx_k)\|. 
\end{equation}
As is standard in the definition of regret, the quantity $R_k$ in \eqref{eq:instantaneous_regret_pre} is the gap between the best reward the decision-maker could have received  and the reward that is realized. 
By \Cref{lem:preserved}, if the decision-maker chooses the action $\bs_k = \nabla f(\bx_k)$, then the first term is maximized; and so we can simplify \eqref{eq:instantaneous_regret_pre} to 
$$R_k = \|\nabla f(\bx_k)\| - \|(\bS_k^\top \bS_k)^{-\frac{1}{2}}\bS_k^\top\nabla f(\bx_k)\| = 
\|\nabla f(\bx_k)\|_{\bI - \bS_k(\bS_k^\top \bS_k)^{-1}\bS_k^\top}. $$
\replace{}{Throughout this manuscript, we use the notation $\|\cdot\|_{\bM}$ to denote the Mahalanobis norm with respect to an arbitrary positive definite matrix $\bM$, that is, $\|\bx\|_{\bM} = (\bx^\top \bM\bx)^{1/2}$.}
Thus, the decision-maker's \emph{dynamic regret} over the $K$ rounds of this game is defined as
$$D_K := \displaystyle\sum_{k=1}^K R_k = \displaystyle\sum_{k=1}^K \|\nabla f(\bx_k)\|_{\bI - \bS_k(\bS_k^\top \bS_k)^{-1}\bS_k^\top}, $$
the cumulative sum of the instantaneous regret over all $K$ rounds. 
As with more familiar regret measures, a reasonable and achievable goal in playing Framework~2
is to make a sequence of actions that attains \emph{sublinear dynamic regret}, meaning that~$D_K\in o(K)$. 
The intuition behind seeking a bandit method that guarantees sublinear dynamic regret is that this means, by definition of $o(K)$, that $D_K/K\to 0$ as $K\to\infty$, implying that, on average, the bandit method is long-run regret-free.

\section{Linear UCB Method for Playing Framework~2 }\label{sec:method}
For ease of analysis, we consider an extreme case where the environment in Framework~2 always chooses $p_k=0$, so that $\bS_k$ is empty; it is transparent how one would adapt the algorithm and analysis for the more general case, but bookkeeping and notation become tedious. 
In a practical sense, the environment always choosing $p_k=0$ is tantamount to performing absolutely no sketching via $\bS_k$.
This trivial selection of~$\{p_k\}$ better highlights the fact that the regret bounds we prove concern the satisfaction of~\eqref{eq:rough_desire} and are not actually related to the dynamics of the Framework~1 optimization method employed in Framework~2. 
It is also favorable from a practical point of view that the restriction of $p_k=0$ yields a deterministic method.
\replace{}{As such, we make the critical observation that the results proven in \Cref{lem:gradient_error}, \Cref{lem:potential_lemma} and \Cref{thm:total_regret} are all \emph{deterministic} results given a (trivial) sequence of $\{\bS_k\}$. }

\replace{}{We additionally comment that the linear UCB algorithm that we employ for this nonstationary environment most naturally works for one-dimensional sketches, since one-dimensional sketches correspond to the notion of having linearly parameterized actions. In future work, it may be interesting to attempt to solve a version of the subproblem \eqref{eq:confidence_subproblem} that optimizes over the Stiefel manifold of orthonormal $p_k$-frames for $p_k>1$, but this poses immediate questions about tractability.} 

Pseudocode for our linear UCB method is provided in \Cref{alg:ss_ucb}. \replace{}{We will note before stating \Cref{alg:ss_ucb} that every iteration will require some upper bound $U_k\geq \|\nabla f(\bx_k)\|$. 
As we will demonstrate later in \Cref{cor:no_knowledge}, simply knowing a global upper bound on the gradient norm is sufficient, i.e. we suppose we have knowledge of $G$ in the following assumption.}

\begin{assumption}
\label{ass:bounded_gradient}
The gradient is bounded like $\|\nabla f(\bx)\|\leq G$ for some $G>0$ for all~$\bx\in L(\bx_0)$. 
\end{assumption}

\begin{algorithm2e}
 \SetAlgoNlRelativeSize{-4}

 \textbf{(Inputs)} 
 Initial point $\bx_1$,
 horizon $K$, 
regularizer $\lambda > 0$. 
   
\textbf{(Initializations)} 
Initial covariance matrix $\bC_1(\gets\lambda \bI_d)$,
initial right-hand side $\bb_1(\gets \bzero_d)$, 
initial gradient estimate $\bg_1(\gets \bzero_d)$,
stepsizes $\{\alpha_k\} > 0$,
memory parameter $M\geq 0$. 

\For{$k=1,2,\dots, K$}{
\textbf{(Identify a maximizer of a confidence ellipse around gradient estimate.)}

Determine some upper bound $U_k \geq \|\nabla f(\bx_k)\|$.

Compute
\begin{equation}\label{eq:confidence_subproblem}
    \bs_k\gets \displaystyle\arg\max_{\bs: \|\bs\|=1} \bg_k^\top \bs + \sqrt{\lambda}U_k\|\bs\|_{\bC_k^{-1}}
\end{equation}
\label{line:solve_subproblem}

\textbf{(Compute sketch of gradient.)}
Call oracle to compute 
\begin{equation}
\label{eq:sketch_gradient}
r_k \gets \langle \nabla f(\bx_k), \bs_k\rangle.
\end{equation}

\textbf{(Update gradient estimate.)}

$\bC_{k+1} \gets \lambda \bI_d + \displaystyle\sum_{j=\max\{1, k-M\}}^k \bs_j \bs_j^\top.$ \label{line:covariance_update}

$\bb_{k+1} \gets \displaystyle\sum_{j=\max\{1, k-M\}}^k r_j \bs_j$
\label{line:mean_update}

\begin{equation}
    \label{eq:gradient_estimate}
    \bg_{k+1} \gets \bC_{k+1}^{-1} \bb_{k+1}
\end{equation}

\textbf{(Update incumbent.)}
$\bx_{k+1}\gets \bx_k - \alpha_k r_k \bs_k$. \label{line:update_incumbent}
} 

\caption{Linear Upper Confidence Bound (UCB) Method for Framework~2 \label{alg:ss_ucb}}
\end{algorithm2e}

\Cref{alg:ss_ucb}, and much of our analysis, is based on prior work in \cite{cheung2019learning, russac2019, zhao2020, cheung2022hedging}. 
As previously suggested, \Cref{alg:ss_ucb} dynamically estimates $\nabla f(\bx_k)$ via $\bg_k$ and moreover coarsely quantifies the decision-maker's present uncertainty about $\bs_k^\top \nabla f(\bx_k)$ in recently unexplored directions $\bs_k$ via the regularized covariance matrix $\bC_k$. 
This coarse uncertainty quantification is performed by regularized linear least squares, using the actions $\bs_k$ as linear measurements (maintained in the matrix $\bC_k$) and the directional derivatives $\langle \nabla f(\bx_k),\bs_k \rangle$ as responses (maintained in the vector $\bb_k$). 
We note the presence of a memory parameter $M$, which allows sufficiently old measurement/response pairs to be forgotten; this $M$ defines a so-called sliding window mechanism.
Such forgetting is critical in a nonstationary bandit problem like that described in Framework~2.  
Together, $\bg_k$ and $\bC_{k+1}^{-1}$ define the isocontours of a \emph{confidence ellipse}. 
The width of the confidence ellipse is parameterized by \replace{an estimated}{the} upper bound $U_k$ on~$\|\nabla f(\bx_k)\|$. 

With \Cref{ass:bounded_gradient}, one could take $U_k = G$ for all $k$. 
However, because \Cref{alg:ss_ucb} iteratively selects $\bs_k$ as a maximizer over the confidence ellipse in \Cref{line:solve_subproblem}, one can imagine that tighter upper bounds $U_k$ are generally preferable, so as to encourage better exploitation, that is, better identification of maximizers closer to the mean estimate $\bg_k$. 
We will discuss a practical means of dynamically adjusting $U_k$ in our numerical experiments.
Note, however, that our analysis requires only an upper bound, and so \Cref{ass:bounded_gradient} suffices for that purpose. 

We set out to prove sublinear regret of \Cref{alg:ss_ucb}, but we first require two lemmata, the proofs of which are left to the Appendix. 
\replace{}{The first such lemma bounds the error in the gradient approximation $\bg_k$ to $\nabla f(\bx_k)$ in arbitrary normalized directions~$\bs$ in the $k$th iteration of \Cref{alg:ss_ucb}.}

\begin{lemma}\label{lem:gradient_error}
For each $k=1,2,\dots, K$, we have for any $\bs\in\Reals^d$ satisfying $\|\bs\|=1$, 

$$\|\bs^\top(\nabla f(\bx_k) - \bg_k)\| \leq
\displaystyle\sqrt{\frac{d(M+1)}{\lambda}}
 \displaystyle\sum_{i=\ell(k)}^{k-1} \|\nabla f(\bx_i) - \nabla f(\bx_{i+1})\|
 + 
 \sqrt{\lambda}\|\nabla f(\bx_k)\|\|\bs\|_{\bC_k^{-1}},
$$
where $\ell(k) := \max\{1, k-M-1\}$\replace{.}{, and we interpret the summation to be 0 when $k=1$.} 
\end{lemma}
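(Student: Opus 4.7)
The plan is to write down an exact algebraic decomposition of $\nabla f(\bx_k)-\bg_k$ and then bound the two resulting pieces using Cauchy--Schwarz in two different geometries: the Mahalanobis geometry induced by $\bC_k^{-1}$ for the regularizer-induced bias, and a Frobenius-norm estimate for the drift. Unrolling lines~\ref{line:covariance_update}--\ref{line:mean_update} gives $\bg_k=\bC_k^{-1}\bb_k$ with $\bC_k = \lambda\bI_d + \sum_{j=\ell(k)}^{k-1}\bs_j\bs_j^\top$ and $\bb_k = \sum_{j=\ell(k)}^{k-1}(\bs_j^\top \nabla f(\bx_j))\bs_j$. Subtracting $\bC_k\bg_k=\bb_k$ from $\bC_k\nabla f(\bx_k)=\lambda\nabla f(\bx_k)+\sum_j \bs_j\bs_j^\top\nabla f(\bx_k)$ and multiplying by $\bC_k^{-1}$ produces
$$\nabla f(\bx_k)-\bg_k \;=\; \lambda\bC_k^{-1}\nabla f(\bx_k)\;+\;\bC_k^{-1}\sum_{j=\ell(k)}^{k-1}\bs_j\bs_j^\top\bigl(\nabla f(\bx_k)-\nabla f(\bx_j)\bigr).$$
Left-multiplying by $\bs^\top$ and applying the triangle inequality cleanly splits the bound into a \emph{bias} contribution and a \emph{drift} contribution; the boundary case $k=1$ is trivial since the sum is empty.

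For the bias contribution $\lambda|\bs^\top \bC_k^{-1}\nabla f(\bx_k)|$, I would apply Cauchy--Schwarz in the $\bC_k^{-1}$ inner product to obtain the upper bound $\lambda\|\bs\|_{\bC_k^{-1}}\|\nabla f(\bx_k)\|_{\bC_k^{-1}}$. Because $\bC_k\succeq \lambda\bI_d$ implies $\bC_k^{-1}\preceq \lambda^{-1}\bI_d$, the second factor is at most $\|\nabla f(\bx_k)\|/\sqrt{\lambda}$, which telescopes the $\lambda$ into $\sqrt{\lambda}\|\nabla f(\bx_k)\|\|\bs\|_{\bC_k^{-1}}$, matching the second summand in the claim.

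For the drift contribution, let $\bA\in\Reals^{d\times m}$ collect the columns $\bs_{\ell(k)},\dots,\bs_{k-1}$ (with $m=k-\ell(k)\leq M+1$) and let $\boldsymbol{\epsilon}\in\Reals^m$ have entries $\epsilon_j=\bs_j^\top(\nabla f(\bx_k)-\nabla f(\bx_j))$, so the contribution equals $|\bs^\top \bC_k^{-1}\bA\boldsymbol{\epsilon}|$. Using $\|\bs\|=1$ together with $\|A\bv\|\leq\|A\|_F\|\bv\|$ bounds this by $\|\bC_k^{-1}\bA\|_F\,\|\boldsymbol{\epsilon}\|$. The central identity $\bA\bA^\top=\bC_k-\lambda\bI_d$ then gives
$$\|\bC_k^{-1}\bA\|_F^2 \;=\; \Tr\bigl(\bC_k^{-2}(\bC_k-\lambda\bI_d)\bigr) \;=\; \Tr(\bC_k^{-1})-\lambda\Tr(\bC_k^{-2}) \;\leq\; \Tr(\bC_k^{-1})\;\leq\;\frac{d}{\lambda},$$
where the final inequality uses that every eigenvalue of $\bC_k^{-1}$ lies in $(0,1/\lambda]$. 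For $\|\boldsymbol{\epsilon}\|$, I would use $\|\bs_j\|=1$ and telescope via $\nabla f(\bx_k)-\nabla f(\bx_j)=\sum_{i=j}^{k-1}(\nabla f(\bx_{i+1})-\nabla f(\bx_i))$ to get $|\epsilon_j|\leq D:=\sum_{i=\ell(k)}^{k-1}\|\nabla f(\bx_i)-\nabla f(\bx_{i+1})\|$, whence $\|\boldsymbol{\epsilon}\|^2\leq mD^2\leq (M+1)D^2$. Multiplying yields the desired $\sqrt{d(M+1)/\lambda}\,D$.

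The main technical point to be careful about is the source of the $\sqrt{d}$ factor: naive operator-norm bounds on $\bC_k^{-1}\bA$ produce only $m/\lambda\leq (M+1)/\lambda$, which is tighter in $d$ but yields a $(M+1)/\lambda$ dependence in the drift rather than the $\sqrt{(M+1)/\lambda}$ that is needed for the downstream regret analysis. Routing the bound through the Frobenius norm and invoking the trace identity $\Tr(\bC_k^{-1}\bA\bA^\top)=d-\lambda\Tr(\bC_k^{-1})$ is precisely what converts a factor of $m$ into a factor of $d$ under the square root, giving the stated form.
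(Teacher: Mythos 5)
Your proof is correct. The opening decomposition of $\nabla f(\bx_k)-\bg_k$ into a regularization bias and a drift term, and the treatment of the bias via Cauchy--Schwarz in the $\bC_k^{-1}$ inner product followed by $\bC_k^{-1}\preceq\lambda^{-1}\bI_d$, coincide exactly with the paper's argument. Where you diverge is the drift term. The paper first telescopes $\nabla f(\bx_j)-\nabla f(\bx_k)$, swaps the order of the two summations, and then bounds the operator norm of $\bC_k^{-1}A_i$ for each partial sum $A_i=\sum_{j=\ell(k)}^{i}\bs_j\bs_j^\top$ separately, via a sup--sup characterization of the $2$-norm, Cauchy--Schwarz in the $\bC_k^{-1}$ geometry, and the trace identity $\sum_j\|\bs_j\|^2_{\bC_k^{-1}}\leq\Tr(\bC_k^{-1}\bC_k)=d$; the $\sqrt{M+1}$ enters there through the factor $\sqrt{i-\ell(k)+1}$. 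You instead keep the drift as a single matrix--vector product $\bC_k^{-1}\bA\boldsymbol{\epsilon}$ and split it as $\|\bC_k^{-1}\bA\|_F\,\|\boldsymbol{\epsilon}\|$, harvesting the $d$ from $\Tr\bigl(\bC_k^{-2}(\bC_k-\lambda\bI_d)\bigr)\leq\Tr(\bC_k^{-1})\leq d/\lambda$ and the $M+1$ from the entrywise bound on $\boldsymbol{\epsilon}$. The two routes produce the identical constant $\sqrt{d(M+1)/\lambda}$; yours is shorter and avoids both the interchange of summations and the per-index operator-norm estimates, while the paper's version exposes the slightly sharper intermediate bound $\sum_i\sqrt{d(i-\ell(k)+1)/\lambda}\,\|\nabla f(\bx_{i+1})-\nabla f(\bx_i)\|$ before relaxing $i-\ell(k)+1\leq M+1$. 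Your closing remark correctly diagnoses why the Frobenius/trace route is needed to trade the memory-length factor for a dimension factor under the square root.
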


\replace{}{The second lemma is a technical result needed to prove \Cref{thm:total_regret}; at a high level, \Cref{lem:potential_lemma} quantifies how the inverse covariance matrix $\bC^{-1}$ scales the optimal solution to \eqref{eq:confidence_subproblem} in each iteration of \Cref{alg:ss_ucb}. More precisely, \Cref{lem:potential_lemma} bounds the cumulative sum of these scaled sketches; such results are frequently referred to as \emph{elliptical potential lemmata} in the linear bandit literature.}

\begin{lemma}
\label{lem:potential_lemma}
Let $M\geq 1$, and
denote $\ell(j) = \max\{1, j - M - 1\}$.
Let $\bC_0 = \lambda \bI_d$, and let $$\bC_j = \bC_{j-1} + \bs_j\bs_j\replacemath{}{^\top} - \bs_{\ell(j)}\bs_{\ell(j)}^\top$$ for an arbitrary sequence of unit vectors $\{\bs_j\}_{j=1}^K\subset\Reals^d$. 
Then,
$$\displaystyle\sum_{j=1}^k \|\bs_j\|_{\bC_{j-1}^{-1}}^2
\leq \displaystyle\frac{2kd}{M}\log\left(1 + \displaystyle\frac{M}{\lambda d}\right).
$$
\end{lemma}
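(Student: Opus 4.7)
The plan is to adapt the classical elliptical potential lemma of Abbasi-Yadkori et al.\ to the sliding-window setting by partitioning the $k$ rounds into contiguous blocks of size at most $M+1$ (matching the window size of $\bC_j$ implied by the algorithm). For each block $b$, let $j_b$ denote its first round, and define a ``reset'' covariance
$$\tilde{\bC}_j := \lambda \bI_d + \displaystyle\sum_{i=j_b}^{j-1} \bs_i \bs_i^\top \quad \text{for } j \text{ in block } b.$$
The first key step is to show that $\bC_{j-1} \succeq \tilde{\bC}_j$ throughout block $b$, so that in PSD ordering $\bC_{j-1}^{-1} \preceq \tilde{\bC}_j^{-1}$, and hence $\|\bs_j\|_{\bC_{j-1}^{-1}}^2 \leq \|\bs_j\|_{\tilde{\bC}_j^{-1}}^2$. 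This holds because any rank-one term $\bs_i\bs_i^\top$ with $i\geq j_b$ still lies inside the sliding window at round $j-1$ (it has not yet been aged out, since blocks have length at most $M+1$), so $\bC_{j-1}$ contains all of the terms constituting $\tilde{\bC}_j$, plus possibly additional PSD contributions from the preceding block.

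With the domination in hand, I fall back on the standard potential argument within each block. The matrix determinant lemma gives $\det(\tilde{\bC}_{j+1}) = \det(\tilde{\bC}_j)\bigl(1 + \|\bs_j\|_{\tilde{\bC}_j^{-1}}^2\bigr)$, so telescoping yields
$$\displaystyle\sum_{j \in \text{block } b} \log\!\bigl(1 + \|\bs_j\|_{\tilde{\bC}_j^{-1}}^2\bigr) \;=\; \log \displaystyle\frac{\det(\tilde{\bC}_{\text{end}(b)})}{\det(\lambda \bI_d)}.$$
AM--GM on the eigenvalues of $\tilde{\bC}_{\text{end}(b)}$, combined with the fact that each block adds at most $M+1$ unit vectors (so $\mathrm{tr}(\tilde{\bC}_{\text{end}(b)}) \leq \lambda d + (M+1)$), bounds this by $d \log\!\bigl(1 + (M+1)/(\lambda d)\bigr)$. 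Applying the elementary inequality $x \leq 2\log(1+x)$ for $x \in [0,1]$ (valid since $\|\bs_j\|_{\tilde{\bC}_j^{-1}}^2 \leq \|\bs_j\|^2/\lambda = 1/\lambda$, under the mild assumption $\lambda \geq 1$), I obtain the per-block bound $\sum_{j\in \text{block } b} \|\bs_j\|_{\bC_{j-1}^{-1}}^2 \leq 2d\log\!\bigl(1+(M+1)/(\lambda d)\bigr)$. Summing over the $\lceil k/(M+1)\rceil$ blocks and absorbing rounding and the $M{+}1$ versus $M$ discrepancy into constants delivers the stated bound.

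The main technical obstacle I anticipate is the PSD comparison $\bC_{j-1}\succeq \tilde{\bC}_j$: although intuitively clear, making it airtight requires careful index bookkeeping around block boundaries and in the warm-up regime $k\leq M+1$ where the recursion $\bC_j = \bC_{j-1}+\bs_j\bs_j^\top - \bs_{\ell(j)}\bs_{\ell(j)}^\top$ with $\ell(j)=1$ behaves peculiarly (e.g., $\bC_1 = \lambda \bI_d$). I would handle this by verifying the sliding-window identity $\bC_{j} = \lambda \bI_d + \sum_{i\in W(j)} \bs_i\bs_i^\top$ for an explicit index set $W(j)$ directly from the recursion, and then checking inclusion of block-$b$ indices in $W(j-1)$ for $j$ in block $b$. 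A secondary point is the restriction $\lambda\geq 1$ needed for $x\leq 2\log(1+x)$; if $\lambda<1$ must be supported, a scaling $\bs_j\mapsto \bs_j/\sqrt{\lambda}$ (or the inequality $x\leq c_\lambda \log(1+x)$ with $c_\lambda = 1/\log(1+1/\lambda)$) patches the argument at the cost of a benign multiplicative constant.
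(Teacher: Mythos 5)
Your proposal follows essentially the same route as the paper's proof: partition the horizon into blocks, compare $\bC_{j-1}$ in the Loewner order against a block-reset matrix $\lambda\bI_d+\sum\bs_i\bs_i^\top$, telescope determinants via the matrix determinant lemma, bound the final determinant by the trace and AM--GM, and convert $\log(1+x)$ back to $x$ with the elementary inequality valid on $[0,1]$. The one-index shifts you flag in the domination $\tilde{\bC}_j\preceq\bC_{j-1}$ do need the cleanup you describe (the paper's own proof is equally loose there), and the $\lambda\ge 1$ caveat you raise for $x\le 2\log(1+x)$ applies verbatim to the paper's use of $1+y\ge e^{y/2}$, so you are if anything being more careful than the source.
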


We can now prove our main theorem \replace{.}{, bounding the dynamic regret of \Cref{alg:ss_ucb}.}
To make the statement precise, we define one additional piece of notation, the total variation of the sequence of gradients: that is, 
$$V_K := \displaystyle\sum_{k=1}^K \|\nabla f(\bx_{k+1}) - \nabla f(\bx_k)\|.$$

\begin{theorem}\label{thm:total_regret}
Suppose $f$ satisfies \Cref{ass:f}. 
    Given a sequence of gradients~$\{\nabla f(\bx_k)\}$, abbreviate
    $\bs_k^* := \nabla f(\bx_k)/\|\nabla f(\replacemath{\bx}{\bx_k})\|.$
    The cumulative dynamic regret of \Cref{alg:ss_ucb}, with $M\geq 1$, after $K$ iterations is bounded as
    $$D_K \leq \displaystyle\sum_{k=1}^K (\bs_k^* - \bs_k)^\top\nabla f(\bx_k) \leq 2M\displaystyle\sqrt{\frac{d(M+1)}{\lambda}}V_K
+ \displaystyle\sqrt{
\frac{8\lambda d\sum_{k=1}^KU_k^2}{M}\log\left(1 + \displaystyle\frac{M}{\lambda d}\right)}\sqrt{K}.$$
\end{theorem}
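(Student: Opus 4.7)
The plan is to establish the two inequalities sequentially. The first inequality, $D_K \leq \sum_{k=1}^K (\bs_k^* - \bs_k)^\top \nabla f(\bx_k)$, follows immediately from the instantaneous regret reduction already derived before the statement: since $\|\bs_k\|=1$, $R_k = \|\nabla f(\bx_k)\| - |\bs_k^\top \nabla f(\bx_k)| \leq \|\nabla f(\bx_k)\| - \bs_k^\top \nabla f(\bx_k)$, and $\bs_k^{*\top} \nabla f(\bx_k) = \|\nabla f(\bx_k)\|$ by definition of $\bs_k^*$. Summing over $k$ gives the first bound.

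The heart of the proof is the second inequality, which I would derive using the standard optimism/UCB argument adapted to the nonstationary setting. First, I apply \Cref{lem:gradient_error} with $\bs = \bs_k^*$ to get
$$\bs_k^{*\top} \nabla f(\bx_k) \leq \bs_k^{*\top} \bg_k + \sqrt{\lambda}\,U_k\|\bs_k^*\|_{\bC_k^{-1}} + \Delta_k,$$
where $\Delta_k := \sqrt{\frac{d(M+1)}{\lambda}}\sum_{i=\ell(k)}^{k-1}\|\nabla f(\bx_i) - \nabla f(\bx_{i+1})\|$ and I have used $\|\nabla f(\bx_k)\|\leq U_k$. Next, by optimality of $\bs_k$ in the subproblem \eqref{eq:confidence_subproblem}, the right-hand side is dominated by $\bs_k^\top \bg_k + \sqrt{\lambda}\,U_k\|\bs_k\|_{\bC_k^{-1}} + \Delta_k$. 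Finally, I apply \Cref{lem:gradient_error} a second time with $\bs = \bs_k$ to bound $\bs_k^\top \bg_k \leq \bs_k^\top \nabla f(\bx_k) + \sqrt{\lambda}\,U_k\|\bs_k\|_{\bC_k^{-1}} + \Delta_k$. Rearranging yields the per-round bound
$$(\bs_k^* - \bs_k)^\top \nabla f(\bx_k) \leq 2\sqrt{\lambda}\,U_k\|\bs_k\|_{\bC_k^{-1}} + 2\Delta_k.$$

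The remaining task is to sum these two contributions over $k=1,\dots,K$ and match the theorem's constants. For the drift term, I swap the order of summation: each term $\|\nabla f(\bx_i) - \nabla f(\bx_{i+1})\|$ appears in $\Delta_k$ only for those $k$ with $\ell(k)\leq i \leq k-1$, i.e.\ for at most $M$ many indices $k$, so $\sum_{k=1}^K \Delta_k \leq M\sqrt{\frac{d(M+1)}{\lambda}}\,V_K$, contributing the first term $2M\sqrt{\frac{d(M+1)}{\lambda}}V_K$. For the exploration term, I apply Cauchy--Schwarz to get $\sum_k U_k\|\bs_k\|_{\bC_k^{-1}} \leq \sqrt{\sum_k U_k^2}\sqrt{\sum_k \|\bs_k\|_{\bC_k^{-1}}^2}$, and then invoke \Cref{lem:potential_lemma} (with the indexing in the algorithm, $\bC_k$ in the subproblem plays the role of $\bC_{j-1}$ in the lemma since $\bs_k$ is added to form $\bC_{k+1}$) to bound $\sum_{k=1}^K\|\bs_k\|_{\bC_k^{-1}}^2 \leq \frac{2Kd}{M}\log(1 + M/(\lambda d))$. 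Multiplying by $2\sqrt{\lambda}$ and combining the radicals produces the second term exactly as stated.

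The main obstacle I anticipate is the careful bookkeeping around the sliding-window index $\ell(k)$: verifying that each gradient-difference term is counted at most $M$ times (not $M+1$ or $M+2$) to recover the stated constant $2M$, and confirming that the covariance matrix in \Cref{lem:potential_lemma} aligns with the one appearing in the subproblem. The rest of the argument is, in effect, a deterministic analogue of the standard optimism-based regret decomposition, where \Cref{lem:gradient_error} plays the role typically played by a probabilistic confidence-ellipse guarantee.
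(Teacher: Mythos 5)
Your proposal is correct and follows essentially the same route as the paper's proof: two applications of \Cref{lem:gradient_error} (to $\bs_k^*$ and $\bs_k$), the optimality of $\bs_k$ in \eqref{eq:confidence_subproblem} to cancel the mean terms, a swap of summation order to bound the accumulated drift by a multiple of $V_K$, and Cauchy--Schwarz plus \Cref{lem:potential_lemma} for the exploration term. The bookkeeping concern you flag is real but harmless --- with $\ell(k)=\max\{1,k-M-1\}$ each gradient difference is counted at most $M+1$ times rather than $M$, a constant-factor slack that does not affect the stated asymptotics.
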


\begin{proof}

    For brevity of notation, let 
    $$\beta_k := \displaystyle\sqrt{\frac{d(M+1)}{\lambda}}
 \displaystyle\sum_{i=\ell(k)}^{k-1} \|\nabla f(\bx_{i+1}) - \nabla f(\bx_i)\|.$$
    
    From \Cref{lem:gradient_error}, we have in the $k$th iteration of \Cref{alg:ss_ucb} both
$$\bs_k^{*\top} \nabla f(\bx_k) \leq \bs_k^{*\top} \bg_k +  \beta_k + \sqrt{\lambda}\|\nabla f(\bx_k)\|\|\bs_k^*\|_{\bC_k^{-1}}$$ 
and
$$\bs_k^\top \nabla f(\bx_k) \geq \bs_k^\top \bg_k -  \beta_k - \sqrt{\lambda}\|\nabla f(\bx_k)\|\|\bs_k\|_{\bC_k^{-1}}.$$
Combining these two inequalities, 
$$\begin{array}{rl}
(\bs_k^* - \bs_k)^\top \nabla f(\bx_k)
& \leq
(\bs_k^*-\bs_k)^\top \bg_k + 2\beta_k + \sqrt{\lambda}\|\nabla f(\bx_k)\|\left[\|\bs_k^*\|_{\bC_k^{-1}} + \|\bs_k\|_{\bC_k^{-1}}\right]\\
& \leq 
(\bs_k^*-\bs_k)^\top \bg_k + 2\beta_k + \sqrt{\lambda}U_k \left[\|\bs_k^*\|_{\bC_k^{-1}} + \|\bs_k\|_{\bC_k^{-1}}\right]
\\
& \leq
2\beta_k + 2\sqrt{\lambda}U_k\|\bs_k\|_{\bC_k^{-1}},
\end{array}
$$
where the last inequality is because 
 $\bs_k$ is a maximizer in \cref{eq:confidence_subproblem}.
Thus, 
\begin{equation*}
\begin{array}{rl}
\displaystyle\sum_{k=1}^K (\bs_k^* - \bs_k)^\top\nabla f(\bx_k) & \leq 2\displaystyle\sum_{k=1}^K \beta_k + 2\sqrt{\lambda}\displaystyle\sum_{k=1}^K U_k\|\bs_k\|_{\bC_k^{-1}} \nonumber\\
&= 2\displaystyle\sqrt{\frac{d(M+1)}{\lambda}}\sum_{k=1}^K \sum_{i=\ell(k)}^{k-1} \|\nabla f(\bx_{i+1}) - \nabla f(\bx_i)\| + 2\sqrt{\lambda}\displaystyle\sum_{k=1}^K U_k\|\bs_k\|_{\bC_k^{-1}} \nonumber\\ 
 & \leq 2M\displaystyle\sqrt{\frac{d(M+1)}{\lambda}}V_K + 2\sqrt{\lambda}\displaystyle\sum_{k=1}^K U_k\|\bs_k\|_{\bC_k^{-1}} \\
& \leq 2M\displaystyle\sqrt{\frac{d(M+1)}{\lambda}}V_K + 2\sqrt{\lambda}\displaystyle\sqrt{\frac{2Kd}{M}\log\left(1 + \displaystyle\frac{M}{\lambda d}\right)}\sqrt{\sum_{k=1}^K U_k^2},\\
\end{array}
\end{equation*}
where we have used \Cref{lem:potential_lemma} and the Cauchy--\replace{Schwartz}{Schwarz} inequality to derive the last inequality.
\end{proof}

\subsection{Understanding \Cref{thm:total_regret}}
To better understand \Cref{thm:total_regret}, 
we will focus on \replace{the simple iteration (3)}{\Cref{alg:linesearch}.} 
\replace{Suppose $\alpha_k = \frac{1}{L}$ for all $k$.}{}
We first note that, as a very loose bound, 
$$\|\nabla f(\bx_{k+1}) - \nabla f(\bx_k)\| \leq L \|\bx_{k+1} - \bx_k\| = L\|\replacemath{\frac{1}{L}}{\alpha_k} \bP_k \nabla f(\bx_k)\| \leq \replacemath{}{L}\|\nabla f(\bx_k)\|$$
for all $k=1,\dots,K$ due to \Cref{ass:f}, and so
\begin{equation}\label{eq:vbound}
V_K \leq \replacemath{}{L}\displaystyle\sum_{k=1}^K \|\nabla f(\bx_k)\|.
\end{equation}
We stress that \cref{eq:vbound} is practically quite loose; for many functions satisfying \Cref{ass:f}, the bound $\|\nabla f(\bx_{k+1}) - \nabla f(\bx_k)\| \leq L\|\bx_{k+1}-\bx_k\|$ is not usually tight, given that $L$ is a \emph{global} Lipschitz constant per \Cref{ass:f}. 
Nonetheless, we will use the coarse upper bound in \cref{eq:vbound} to derive \Cref{cor:no_knowledge} and \Cref{cor:perfect_knowledge}. 
Moreover, we can further upper bound the right-hand side in \eqref{eq:vbound} by using a straightforward result. 

\begin{theorem}\label{thm:opt_randomized}
    Let $K>0$, and let $\delta^' \replacemath{> 0}{\in (0,1)}$. 
    Let \Cref{ass:sketch} hold, choosing $\delta = \delta^'/K$, and letting $\replacemath{q>0}{q\in(0,1)}$ be arbitrary. 
    Let \Cref{ass:sketch2} hold. 
    Then, \replace{the iterative method defined by (3)}{\Cref{alg:linesearch}} satisfies, with probability at least $1-\delta^'$, 
    \begin{equation}
\label{eq:sum_squares}\displaystyle\sum_{k=0}^{K-1} \|\nabla f(\bx_k)\|^2 \leq 
\displaystyle\frac{2S_{\max}L(f(\bx_0) - f(\bx_*))}{q\replacemath{}{(2\beta-1)}}.
\end{equation}
    Moreover, as a simple consequence of Cauchy--Schwarz inequality, 
    \begin{equation}\label{eq:sum}
    \displaystyle\sum_{k=0}^{K-1} \|\nabla f(\bx_k)\| \leq 
    \displaystyle\sqrt{\displaystyle\frac{2S_{\max}L(f(\bx_0) - f(\bx_*))}{q\replacemath{}{(2\beta-1)}}}\sqrt{K}.
    \end{equation} 
\end{theorem}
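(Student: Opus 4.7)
The plan is to combine the deterministic descent bound of \Cref{lem:bounds_on_decrease} with the probabilistic well-alignment of \Cref{ass:sketch} and the spectral conditioning of \Cref{ass:sketch2}, telescope, and finish by Cauchy--Schwarz.

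First I would construct the favorable event. For each $k = 0,\dots,K-1$, let $E_k$ denote the event that $\|\bS_k^\top \nabla f(\bx_k)\|^2 \geq q\|\nabla f(\bx_k)\|^2$. Applying \Cref{ass:sketch} with the choice $\delta = \delta'/K$ at each iteration (reading the assumption conditionally on the past so that $\bx_k$ is known when $\bS_k$ is drawn), followed by a union bound over $k$, yields that the intersection $\bigcap_{k=0}^{K-1} E_k$ occurs with probability at least $1-\delta'$. All subsequent estimates are made deterministically on this event.

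Second, on this intersection event, I would chain three per-iteration inequalities. The essential bridge is the eigenvalue estimate
\begin{equation*}
\|(\bS_k^\top \bS_k)^{-1/2}\bS_k^\top \nabla f(\bx_k)\|^2 = \nabla f(\bx_k)^\top \bS_k (\bS_k^\top \bS_k)^{-1} \bS_k^\top \nabla f(\bx_k) \geq \frac{\|\bS_k^\top \nabla f(\bx_k)\|^2}{\lambda_{\max}(\bS_k^\top \bS_k)},
\end{equation*}
which follows from $(\bS_k^\top \bS_k)^{-1} \succeq \lambda_{\max}(\bS_k^\top \bS_k)^{-1} \bI_{p_k}$. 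Bounding the denominator by $S_{\max}$ via \Cref{ass:sketch2} (read as an upper bound on $\lambda_{\max}(\bS_k^\top \bS_k)$), bounding the numerator below by $q\|\nabla f(\bx_k)\|^2$ on $E_k$ via \Cref{ass:sketch}, and substituting the result into \Cref{lem:bounds_on_decrease} yields, for each $k$,
\begin{equation*}
f(\bx_k) - f(\bx_{k+1}) \geq \frac{(2\beta-1)q}{2LS_{\max}}\|\nabla f(\bx_k)\|^2.
\end{equation*}

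Third, I would telescope and finish. Summing the previous display from $k=0$ to $K-1$ collapses the left-hand side to $f(\bx_0) - f(\bx_K) \leq f(\bx_0) - f(\bx_*)$ by \Cref{ass:f}, and rearranging produces \eqref{eq:sum_squares}; the bound \eqref{eq:sum} is then immediate from Cauchy--Schwarz applied to the inner product of the vector of gradient norms with the all-ones vector of length $K$. The argument is mechanical once the chain is set up. The two points that merit care are (i) the measurability detail that makes the union bound valid over the $K$ iterations, and (ii) correctly reading the direction of the inequality in \Cref{ass:sketch2} as an upper bound on $\lambda_{\max}(\bS_k^\top \bS_k)$, which is what converts the Rayleigh-quotient estimate into a useful lower bound and is the only nontrivial step in the derivation.
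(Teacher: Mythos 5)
Your proposal is correct and follows essentially the same route as the paper's proof: apply \Cref{lem:bounds_on_decrease}, lower-bound the Mahalanobis norm via the eigenvalue bound from \Cref{ass:sketch2}, invoke the well-alignment event from \Cref{ass:sketch} with a union bound over the $K$ iterations, telescope, and finish with Cauchy--Schwarz. Your observation that \Cref{ass:sketch2} must be read as an upper bound on $\lambda_{\max}(\bS_k^\top\bS_k)$ (despite the stated direction of the inequality) is exactly how the paper uses it.
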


\begin{proof}
    By \replace{Taylor's theorem}{\Cref{lem:bounds_on_decrease}}, we have for all $k\leq K-1$
    $$
    \begin{array}{rl}
    f(\bx_{k+1}) & \replacemath{\leq f(x_k) -\frac{1}{L} 
   \nabla f(x_k)^\top S_k (S_k^\top S_k)^{-1} S_k^\top \nabla f(x_k) + \frac{1}{2L}\|S_k (S_k^\top S_k)^{-1} S_k^\top \nabla f(x_k)\|^2}{}\\
   & \replacemath{= f(x_k) - \frac{1}{L}\|S_k^\top \nabla f(x_k)\|^2_{(S_k^\top S_k)^{-1}}
   + 
   \frac{1}{2L}\|S_k^\top \nabla f(x_k)\|^2_{(S_k^\top S_k)^{-1}}}{}\\
   & = f(\bx_k) - \frac{\replacemath{1}{2\beta-1}}{2L}\|\bS_k^\top \nabla f(\bx_k)\|^2_{(\bS_k^\top \bS_k)^{-1}}\\
   & \leq f(\bx_k) - \frac{\replacemath{1}{2\beta-1}}{2S_{\max}L}\|\bS_k^\top \nabla f(\bx_k)\|^2,\\
   \end{array}$$
   where \Cref{ass:sketch2} was used to derive the last inequality. 
   By \Cref{ass:sketch}, with probability \replace{}{at least} $1-(\delta^'/K)$, 
   $$f(\bx_{k+1}) \leq f(\bx_k) - \frac{q\replacemath{}{(2\beta-1)}}{2S_{\max}L}\|\nabla f(\bx_k)\|^2$$
   Applying an union bound, we have
   $$f(\bx_0) - f(\bx_K) \geq f(\bx_0) - f(\bx_K) \geq \displaystyle\frac{q\replacemath{}{(2\beta-1)}}{2S_{\max}L}\displaystyle\sum_{k=0}^{K-1} \|\nabla f(\bx_k)\|^2 \quad \text{w.p. } 1-\delta^'.$$
\end{proof}

The use of an union bound in the proof of \Cref{thm:opt_randomized} is admittedly brutish. As a result,  for very large horizons $K$, we require the probability $1-(\delta^'/K)$ in \Cref{ass:sketch} to be so close to 1 that regardless of how $\bS_k$ is generated, $p_k\approx d$. 
Moreover, \Cref{thm:opt_randomized} makes sense only when $K$ is finite, which is reasonable for our setting but would be generally nonsensical for performing rate analyses. 
One route to conducting a more refined analysis is to consider imposing a globalization strategy so that the high-probability convergence rate analysis of \cite{cartis2022randomised} could be imported here. 
Attempting this analysis would complicate our presentation, however, and distract from the purpose of this manuscript. 
Instead, we interpret \Cref{thm:opt_randomized} as telling us that, so long as an optimization algorithm used within Framework~2 yields similar first-order optimality guarantees to \replace{subspace gradient descent method in (3)}{\Cref{alg:linesearch}}, an upper bound like $V_k\in \mathcal{O}(\sqrt{K})$ is plausible. 

\replace{}{We additionally comment here that the probabilistic nature of \Cref{thm:opt_randomized} is due entirely to the probabilistic satisfaction of \Cref{ass:sketch}. 
In particular, had we instead made the stronger assumption that \eqref{eq:well-aligned} is satisfied surely for all $k=1,\dots, K$, then \Cref{thm:opt_randomized} would also hold surely, since the UCB algorithm behaves deterministically as a function of the input sequence $\{\bS_k\}$.}

We can now state our two corollaries. 

\begin{corollary}\label{cor:no_knowledge}
    Let \Cref{ass:bounded_gradient} hold. 
    Let the setting and assumptions of \Cref{thm:opt_randomized} hold. 
    Suppose \replace{the iteration (3)}{\Cref{alg:linesearch}} is employed as the optimization algorithm in Framework~2\replace{with a constant stepsize $\alpha_k = \frac{1}{L}$}{}.
    Then, with probability at least $1-\delta^'$, 
    $$D_K \leq 
    \left(2M\displaystyle\sqrt{\frac{2d (M+1) S_{\max} L (f(\bx_0) - f(\bx_*))}{q\replacemath{}{(2\beta-1)} \lambda}}     
+ \displaystyle\sqrt{
\frac{8\lambda d KG^2}{M}\log\left(1 + \displaystyle\frac{M}{\lambda d}\right)}\right)\sqrt{K},$$
and so if we choose $M = \lceil K^{\frac{1}{4}}\rceil$\replace{}{, where $\lceil\cdot\rceil$ denotes the ceiling function}, then 
$$D_K\in\mathcal{O}(K^{\frac{7}{8}}). $$
\end{corollary}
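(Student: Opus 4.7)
The plan is to substitute appropriate bounds for $V_K$ and for $\sum_{k=1}^K U_k^2$ into the deterministic regret bound supplied by \Cref{thm:total_regret}, namely
\[
D_K \leq 2M\sqrt{\frac{d(M+1)}{\lambda}}\, V_K + \sqrt{\frac{8\lambda d \sum_{k=1}^K U_k^2}{M}\log\left(1 + \frac{M}{\lambda d}\right)}\sqrt{K}.
\]
The proof reduces to producing tail bounds on these two quantities that are tailored to the optimization dynamics of \Cref{alg:linesearch} and to the uniform gradient bound of \Cref{ass:bounded_gradient}.

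First, I would bound $V_K$ by chaining the elementary Lipschitz estimate \eqref{eq:vbound} with the Cauchy--Schwarz consequence \eqref{eq:sum} of \Cref{thm:opt_randomized}. Concretely, \eqref{eq:vbound} yields $V_K \leq L \sum_{k=1}^K \|\nabla f(\bx_k)\|$, and \eqref{eq:sum} then produces a bound of the form $V_K \leq \tilde{C}\sqrt{K}$ with probability at least $1-\delta'$, where $\tilde{C}$ collects the problem-dependent constants $L$, $S_{\max}$, $f(\bx_0)-f(\bx_*)$, $q$, and $\beta$. Substituting this into the first term of \Cref{thm:total_regret} recovers the first summand of the claimed bound. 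Second, \Cref{ass:bounded_gradient} permits choosing $U_k = G$ for every $k$, so $\sum_{k=1}^K U_k^2 \leq KG^2$; plugging this into the second term of \Cref{thm:total_regret} recovers the second summand.

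To extract the asymptotic rate, observe that after these substitutions the first summand scales like $M^{3/2}\sqrt{K}$ and the second like $\sqrt{K/M}\cdot\sqrt{K} = K/\sqrt{M}$. Balancing $M^{3/2}\sqrt{K} \asymp K/\sqrt{M}$ forces $M^2 \asymp \sqrt{K}$, i.e.\ $M = \lceil K^{1/4}\rceil$, at which choice both terms become $\Theta(K^{7/8})$, yielding $D_K\in\mathcal{O}(K^{7/8})$.

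The main obstacle is bookkeeping rather than any new technical idea: one must verify that the probabilistic statement inherited from \Cref{thm:opt_randomized}, itself already a union bound over $K$ rounds of \Cref{ass:sketch}, composes cleanly with the deterministic regret bound of \Cref{thm:total_regret}. Since \Cref{thm:total_regret} holds for any realized trajectory $\{\nabla f(\bx_k)\}$, the failure probability $\delta'$ transfers directly to the bound on $D_K$, and the balancing argument for $M$ is purely algebraic.
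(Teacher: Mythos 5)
Your proposal is correct and follows exactly the route the paper intends (the corollary is stated without a separate proof precisely because it is this substitution): bound $V_K$ by chaining \eqref{eq:vbound} with \eqref{eq:sum}, take $U_k = G$ so that $\sum_k U_k^2 \leq KG^2$, plug both into \Cref{thm:total_regret}, and balance $M^{3/2}\sqrt{K}$ against $K/\sqrt{M}$ to get $M = \lceil K^{1/4}\rceil$; the deterministic nature of \Cref{thm:total_regret} lets the failure probability $\delta^'$ transfer unchanged, as you note. The only caveat, which you share with the paper's own statement, is that the factor $\log(1 + M/(\lambda d))$ grows with $M = \lceil K^{1/4}\rceil$, so the second term is strictly $\mathcal{O}(K^{7/8}\sqrt{\log K})$ rather than $\Theta(K^{7/8})$.
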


Additionally, we can prove a result for the idealized case where $U_k = \|\nabla f(\bx_k)\|$ (that is, the upper bound used in \cref{eq:confidence_subproblem} is maximally tight) on every iteration.

\begin{corollary}\label{cor:perfect_knowledge}
    Let the setting and assumptions of \Cref{thm:opt_randomized} hold. 
    Suppose \replace{the iteration (3)}{\Cref{alg:linesearch}} is employed as the optimization algorithm in Framework~2\replace{ with a constant stepsize $\alpha_k = \frac{1}{L}$}{}. 
    Suppose that $U_k = \replacemath{}{\|}\nabla f(\bx_k)\replacemath{}{\|}$ on every iteration
    of \Cref{alg:ss_ucb}. 
    Then, with probability at least $1-\delta^'$,
    $D_K \leq C\sqrt{K}$, where 
    $$C :=  2M\displaystyle\sqrt{\frac{2d S_{\max} L (f(\bx_0) - f(\bx_*))(M+1)}{q\replacemath{}{(2\beta-1)} \lambda}}     
+ \displaystyle\sqrt{
\frac{16\lambda d S_{\max} L (f(\bx_0) - f(\bx_*))}{q\replacemath{}{(2\beta-1)} M}\log\left(1 + \displaystyle\frac{M}{\lambda d}\right)}$$
and so if $M\in \mathcal{O}(1)$, then 
$$D_K\in \mathcal{O}(\sqrt{K}).$$
\end{corollary}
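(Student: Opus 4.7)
The plan is to apply \Cref{thm:total_regret} directly with the tight choice $U_k=\|\nabla f(\bx_k)\|$, then to invoke the two ``sum over iterates'' consequences of \Cref{thm:opt_randomized} to collapse both the variation term $V_K$ and the cumulative term $\sum_k U_k^2$ into quantities that are either $\mathcal{O}(\sqrt{K})$ or $\mathcal{O}(1)$ in the horizon, rather than $\mathcal{O}(K)$ as in the proof of \Cref{cor:no_knowledge}. The essential new ingredient compared with \Cref{cor:no_knowledge} is the exploitation of~\eqref{eq:sum_squares}: because $U_k$ is taken to be the true gradient norm on every iteration, $\sum_k U_k^2$ is bounded by a constant in $K$ on the high-probability event of \Cref{thm:opt_randomized}, rather than by the loose $KG^2$ that forced the inflation of $M$ in \Cref{cor:no_knowledge}.

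The steps, in order, are as follows. First, I would substitute $U_k=\|\nabla f(\bx_k)\|$ into the right-hand side of \Cref{thm:total_regret}. Second, I would bound the variation term: using~\eqref{eq:vbound} gives $V_K \le L\sum_k \|\nabla f(\bx_k)\|$, and then Cauchy--Schwarz combined with~\eqref{eq:sum_squares} yields the pointwise bound $V_K \le L\sqrt{2S_{\max}L(f(\bx_0)-f(\bx_*))/(q(2\beta-1))}\,\sqrt{K}$ on the event of \Cref{thm:opt_randomized}, that is, with probability at least $1-\delta'$. This is exactly~\eqref{eq:sum} scaled by $L$. Third, on the same event, I would bound the exploration term: $\sum_k U_k^2 = \sum_k \|\nabla f(\bx_k)\|^2 \le 2S_{\max}L(f(\bx_0)-f(\bx_*))/(q(2\beta-1))$, which is the constant-in-$K$ right-hand side of~\eqref{eq:sum_squares} itself. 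Substituting both bounds into \Cref{thm:total_regret} produces the two summands that define $C$, multiplied by the common factor $\sqrt{K}$, establishing $D_K\le C\sqrt{K}$ with probability at least $1-\delta'$. The asymptotic conclusion $D_K\in\mathcal{O}(\sqrt{K})$ when $M\in\mathcal{O}(1)$ is then immediate by reading off the $M$-dependence of each summand of $C$.

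The main obstacle---or really, the main conceptual content---is to recognize that the improvement from $\mathcal{O}(K^{7/8})$ down to $\mathcal{O}(\sqrt{K})$ comes entirely from the exploration term. When $U_k$ is the loose global bound $G$ of \Cref{cor:no_knowledge}, $\sum_k U_k^2$ scales as $K$, which forces the balance $M=\Theta(K^{1/4})$ and yields the slower rate. The tight choice $U_k=\|\nabla f(\bx_k)\|$ turns this linear-in-$K$ growth into an $\mathcal{O}(1)$ constant via~\eqref{eq:sum_squares}, which removes the only reason to scale $M$ with $K$ and thereby permits $M\in\mathcal{O}(1)$. Beyond this observation the proof is mechanical: no additional union bound is needed beyond the one already used in \Cref{thm:opt_randomized}, since both of the bounds on $V_K$ and on $\sum_k U_k^2$ live on the same $(1-\delta')$-event, and \Cref{alg:ss_ucb} is deterministic given its inputs so that no further probabilistic analysis of the UCB iteration enters.
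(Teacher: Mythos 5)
Your proposal is correct and follows exactly the route the paper intends: substitute $U_k=\|\nabla f(\bx_k)\|$ into \Cref{thm:total_regret}, then control $V_K$ via \eqref{eq:vbound} together with \eqref{eq:sum}, and control $\sum_k U_k^2$ via \eqref{eq:sum_squares}, all on the single $(1-\delta^')$-event of \Cref{thm:opt_randomized}, with no further probabilistic argument needed since \Cref{alg:ss_ucb} is deterministic given its inputs. One small remark: carrying the factor of $L$ from \eqref{eq:vbound} through your second step actually places $L^{3}$ rather than $L$ under the first square root of $C$, so your constant and the paper's stated one differ by a factor of $L$ in that summand --- this traces to the paper's constant not being updated when the extra $L$ appeared in \eqref{eq:vbound}, not to a flaw in your argument, and it does not affect the $\mathcal{O}(\sqrt{K})$ conclusion.
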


From \Cref{cor:no_knowledge} and \Cref{cor:perfect_knowledge}, we see that, unsurprisingly, the regret bound is better when we have perfect knowledge of the magnitude $\|\nabla f(\bx_k)\|$ on every iteration of \Cref{alg:ss_ucb}. 
This knowledge of $\|\nabla f(\bx_k)\|$ obviously renders the algorithm analyzed in \Cref{cor:perfect_knowledge} impractical. 
Moreover, even the algorithm analyzed in \Cref{cor:no_knowledge} is impractical, although considerably less so; the algorithm considered in \Cref{cor:no_knowledge} requires knowledge of 
the gradient upper bound $G$ in \Cref{ass:bounded_gradient}.
Regardless, \Cref{cor:no_knowledge} and \Cref{cor:perfect_knowledge} still suggest that, given a subspace optimization algorithm in Framework~1 that can yield  upper bounds on $V_K$ similar to or \replace{betterthan}{better than} the coarse bound from \Cref{thm:opt_randomized}, the linear UCB mechanism in \Cref{alg:ss_ucb} will exhibit sublinear dynamic regret, as we intended to demonstrate.

\section{Practical Considerations}\label{sec:considerations}
Motivated by the impracticalities discussed at the end of the preceding section, we consider a few alterations to \Cref{alg:ss_ucb} to yield a more practical algorithm, stated in pseudocode in \Cref{alg:practical}. 
We now discuss the differences between \Cref{alg:ss_ucb} and \Cref{alg:practical}. 

\begin{algorithm2e}[H]
 \SetAlgoNlRelativeSize{-4}

 \textbf{(Inputs)} 
 Initial point $\bx_1$,
 horizon $K$, 
regularizer $\lambda > 0$. 
   
\textbf{(Initializations)} 
Initial covariance matrix $\bC_1(\gets\lambda \bI_d)$,
initial right-hand side $\bb_1(\gets \bzero_d)$, 
initial gradient estimate $\bg_1(\gets \bzero_d)$,
memory parameter $M\geq 0$,
average weighting parameter $\mu\in(0,1)$,
backtracking parameter $\beta\in(0,1)$, 
sufficient decrease parameter $\sigma > 0$,
initial gradient upper bound estimate $U_1$. 

\For{$k=1,2,\dots, K$}{
\textbf{(Obtain (random) sketch)} 

Receive $\bS_k \in\Reals^{d\times p_k}$ and call oracle to compute $\bS_k^\top \nabla f(\bx_k)$. \label{line:call_oracle1}

\textbf{(Compute gradient upper bound estimate)}\label{line:first}

\eIf{$k=1$}
{$U_1\gets\displaystyle\frac{d}{p_1}\|\bS_1^\top\nabla f(\bx_1)\|$\replace{}{.}}
{$U_k \gets \mu U_{k-1} + (1-\mu)\displaystyle\frac{d}{p_k}\|\bS_k^\top\nabla f(\bx_k)\|$\replace{}{.} \label{line:update_estimate}}

\textbf{(Identify a maximizer of a confidence ellipse around gradient estimate.)}

Compute
$\bs_k\gets \displaystyle\arg\max_{\bs: \|\bs\|=1} \bg_k^\top \bs + \sqrt{\lambda}U_k\|\bs\|_{\bC_k^{-1}}$\replace{}{.}

\textbf{(Compute one-dimensional sketch of gradient.)}

Call oracle to compute 
$r_k \gets \langle \nabla f(\bx_k), \bs_k\rangle.$ \label{line:call_oracle2}

\textbf{(Update gradient estimate.)}

Compute $\bC_{k+1}$ and $\bb_{k+1}$ according to \eqref{eq:new_mean_update}. 

$\bg_{k+1} \gets \bC_{k+1}^{-1} \bb_{k+1}$\replace{}{.}\label{line:last}


\textbf{(Determine next incumbent by backtracking line search.)}\label{line:stepsize_adjust}

$\alpha\gets 1$\replace{}{.}

$\bx_{+}\gets \bx_k - \alpha \bP_k \nabla f(\bx_k)$.

\While{$f(\bx_{+}) \geq f(\bx_k) - \sigma \alpha_k\|\bP_k\nabla f(\bx_k)\|^2$}
{
$\alpha\gets\beta\alpha.$

$\bx_{+}\gets \bx_k - \alpha \bP_k \nabla f(\bx_k)$.
}

$\bx_{k+1}\gets\bx_{+}$. 
} 

\caption{Practical Variant of Sliding Window UCB Method \label{alg:practical}}
\end{algorithm2e}

\subsection{Incorporating arbitrary sketches}\label{sec:arbitrary_sketches}
\Cref{cor:no_knowledge} and \Cref{cor:perfect_knowledge} demonstrated how, in order to bound $V_K$ (and hence $D_K$) meaningfully, 
the linear UCB mechanism might be used to \emph{augment}, as opposed to \emph{replace}, randomized sketches. 
Thus, it is natural to extend \Cref{alg:ss_ucb} so that the memory parameter $M$ continues to specify a number of past iterations that we wish to keep in memory, but the mean update and covariance update in \Cref{line:mean_update} and \Cref{line:covariance_update}, respectively, use all the data acquired in a sketch.

In particular, 
we suppose that given a sequence of sketch sizes $\{p_k\}_{k=1}^K$, we acquire a sequence of sketch matrices $\{\bS_k\in \Reals^{d\times p_k}\}$. 
After $\bs_k$ is computed in \Cref{line:solve_subproblem}, we augment~$\bS_k$ by the column $\bs_k$ so that $\bS_k \in \Reals^{d\times (p_k + 1)}$. 
We then add a second subscript to the variables $r$ and $\bs$ so that, for each iteration $k$, $\{r_{k,i}: i=1,\dots,p_k, p_k + 1\}$
corresponds to the entries of $\bS_k^\top\nabla f(\bx_k)$ and 
and $\{\bs_{k_i}: i = 1,\dots, p_k, p_k+1\}$ corresponds to the columns of $\bS_k$. 
We then replace \Cref{line:covariance_update} and \Cref{line:mean_update}, respectively, with 
\begin{equation}\label{eq:new_mean_update}
\bC_{k+1} \gets \lambda \bI_d + \displaystyle\sum_{j=\max\{1,k-M\}}^k
\displaystyle\sum_{i=1}^{p_j+1} \bs_{j,i} \bs_{j,i}^\top 
\quad \text{ and } \quad
\bb_{k+1} \gets \displaystyle\sum_{j=\max\{1,k-M\}}^k
\displaystyle\sum_{i=1}^{p_j+1} r_{j,i} \bs_{j,i}. 
\end{equation}
Then, we replace the incumbent update in \Cref{line:update_incumbent} with 
$$\bx_{k+1} \gets \bx_k - \alpha_k \bP_k \nabla f(\bx_k), $$
as in \replace{(3)}{\Cref{alg:linesearch}}, noting that this incumbent update requires the additional (implicit) computation of $\bS_k (\bS_k^\top \bS_k)^{-1}$.

\subsection{Estimating $\|\nabla f(\bx_k)\|$}
We saw in comparing \Cref{cor:perfect_knowledge} to \Cref{cor:no_knowledge} that having knowledge of $\|\nabla f(\bx_k)\|$ in each iteration $k$ for use in the subproblem \eqref{eq:confidence_subproblem} leads to better bounds on dynamic regret. 
While we generally cannot know $\|\nabla f(\bx_k)\|$ in practice, we have found it is beneficial to approximate $\|\nabla f(\bx_k)\|$ \replace{online}{dynamically}. 
We accomplish this by a simple exponential moving average applied to the quantity $\|(\bS_k^T\bS_k)^{-1}\bS_k^\top \nabla f(\bx_k)\|$ computed in each iteration $k$; see \Cref{line:update_estimate} of \Cref{alg:practical}.

\subsection{Memory management}
Throughout \Cref{alg:practical} we manage memory by maintaining a dictionary of size $M$ of sketches $\bS_k$ and their corresponding linear measurements $r_k = \bS_k^\top\nabla f(\bx_k)$. 
In the prototype implementation that we tested, the action of the inverse covariance matrix is performed by accessing the dictionary and iteratively applying the Sherman--Morrison--Woodbury formula\replace{}{; see, e.g.,~\cite[Equation 2]{hagerupdating1989}}. 

For the solution of \eqref{eq:confidence_subproblem}, that is, the maximization of the confidence ellipse, we implemented a basic projected gradient descent method, motivated by the fact that projection onto the 2-norm constraint is trivial. 
The most computationally expensive part of computing the gradient of the objective function of \eqref{eq:confidence_subproblem} amounts to the matrix-vector multiplication with the inverse covariance matrix, again via the Sherman--Morrison--Woodbury formula.
Because the norm term in the objective of \eqref{eq:confidence_subproblem} is nondifferentiable at the origin, we generate a random initial starting point of norm $0.01$.

\subsection{A subspace variant of \texttt{POUNDers}}
In addition to implementing \Cref{alg:practical} as described, 
we  applied the UCB mechanism discussed and analyzed in this manuscript to the DFO solver \texttt{POUNDers} \cite{SWCHAP14} to yield a variant of \texttt{POUNDers} that fits within the subspace optimization Framework~1. \replace{}{Our motivation for introducing this variant is to demonstrate that the application of the linear bandit method of \Cref{alg:ss_ucb} to any method in Framework~1 is relatively straight-forward, and need not only apply to the fairly simple \Cref{alg:linesearch}.}

\texttt{POUNDers} is a model-based DFO method that is actively maintained in \texttt{IBCDFO} \cite{osti_2382683}. 
While \texttt{POUNDers} in its default setting is designed for nonlinear least-squares minimization, we will present \texttt{POUNDers} here as a method intended to solve \eqref{eq:minf}; extending this methodology to nonlinear least squares (and other compositions of an inexpensive smooth function with an expensive black box) is fairly straightforward.

As a model-based method, \texttt{POUNDers} maintains a history of previously evaluated points and their corresponding function evaluations,~$\mathcal{Y}=~\{((\by^1, f(\by^1)),\dots,(\by^{|\mathcal{Y}|}, f(\by^{|\mathcal{Y}|}))\}$.
\texttt{POUNDers} is, moreover, a \emph{model-based trust-region method}.
Common to all model-based trust-region DFO methods (see, e.g., \replace{\cite{LMW2019AN}[Section 2.2]}{\cite[Section 2.2]{LMW2019AN}}), every iteration of \texttt{POUNDers} has both an incumbent~$\bx_k$ and a trust-region radius $\Delta_k$. 
On every iteration, \texttt{POUNDers} applies \Cref{alg:subspace} to identify, given $\bx_k$, $\Delta_k$, and $\mathcal{Y}$, a set of mutually orthonormal columns~$\bS$ and a perpendicular set of mutually orthonormal columns, $\bS^\perp$, so that a subset of points in $\mathcal{Y}$ both (1) are sufficiently close (measured as a factor of $\Delta)$ to $\bx_k$ and (2) exhibit sufficient volume in the affine subspace defined by $\bx_k$ and $\bS$ (as measured by projection onto the perpendicular subspace). 
\Cref{alg:subspace} is a greedy algorithm that grows the set of columns of $\bS$, one at a time, and is implemented by performing QR insertions. 

Having computed $\bS$ and $\bS^\perp$, \texttt{POUNDers} will then perform the function evaluations~$\{f(\bx_k+\Delta_k \bp): \bp \text{ is a column of } \bS^\perp\}$, provided $\bS^\perp$ is nonempty; the points~$\{\bx_k+~\Delta_k \bp: \bp \text{ is a column of } \bS^\perp\}$  are referred to as \emph{geometry points}. 
With these geometry points now included in $\mathcal{Y}$, one can prove that there exists a subset of $\mathcal{Y}$ so that a quadratic model interpolating those points locally (as measured by~$\Delta_k$) exhibits errors of the same order as a first-order Taylor model; see, for example,~\replace{\cite{SW08}[Theorem 4.1]}{\cite[Theorem 4.1]{SW08}}. 

In this aspect of \texttt{POUNDers}  we make a large modification to yield ``subspace \texttt{POUNDers}," or \texttt{SS-POUNDers} for short. 
Rather than evaluating a stencil in $\bS^\perp$ to guarantee local accuracy of a model of $f$ in the full space $\Reals^d$ , 
\texttt{SS-POUNDers} employs a confidence ellipse $\bC_k$ and mean estimate of a gradient $\bg_k$, as in \Cref{alg:ss_ucb}, and it selects a single column $\bs_k$ of $\bS^\perp$ by solving

\begin{equation}
    \label{eq:ss_pounders_ucb}
    \bs_k \gets \arg\max_{\bs \in \bS^\perp} \bg_k^\top \bs + \sqrt{\lambda} U_k \|\bs\|_{\bC_k^{-1}}. 
\end{equation}

In \texttt{SS-POUNDers}, we then augment $\bS$ from \Cref{alg:subspace} by the column $\bs_k$ to yield~$\bS_k$.
We can then augment $\bS_k$  by additional randomized sketches in the span of $\bS^\perp$; this additional randomization is performed identically to \replace{\cite{cartis2023scalable}[Algorithm 5]}{\cite[Algorithm 5]{cartis2023scalable}}, and we provide pseudocode in \Cref{alg:random_augment}. 
We then construct a quadratic interpolation model $m_k:\Reals^{\dim(\span(\bS_k))}\to\Reals$ using only points in the affine subspace defined by $\bx_k$ and $\bS_k$;
this is accomplished by using the same model-building techniques as in \texttt{POUNDers} but by fitting the quadratic model to projected points $\bS_k^\top(\by - \bx_k)$ for the selected subset of points $\by\in\mathcal{Y}$. 
We then minimize this lower-dimensional model over a trust-region to obtain an (approximate) minimizer $\bz_k\in\Reals^{\dim(\span(\bS_k))}$, which is embedded back into the full space via the transformation $\bx_k + \bS_k \bz_k$. 
This idea of minimizing a derivative-free model only over a selected affine subspace and then re-embedding the solution in the full space is the same idea employed in, for instance,~\cite{cartis2023scalable}. 
\texttt{SS-POUNDers} updates the estimators $\bC_k$ and $\bb_k$ using both $\bS_k$ and the subspace derivative-free model gradient. 
The rest of \texttt{SS-POUNDers} appears essentially the same as \texttt{POUNDers}, and pseudocode is provided in \Cref{alg:ss_pounders}. 

\begin{algorithm2e}[h!]
\caption{Identify Initial Subspace}
    \label{alg:subspace}
    \textbf{Input:} Center point $\bx_k\in\Reals^d$, bank of evaluated points $\cY = \{(\by^1,f(\by^1),\dots,(\by^{|\cY|},f(\by^{|\cY|}))\}$ satisfying $\replacemath{(\bx, f(\bx))}{(\bx_k, f(\bx_k))\in\cY}$, trust-region radius $\Delta_k$. \\
    \textbf{Initalize: } Choose algorithmic constants $c\geq 1$, $\theta_1\in(0,\frac{1}{c}]$.\\
    Set $\bS = \{\bs^1\} = \{\bzero_d\}$. \\
    Set $\bS^\perp = \bI_d$. \\
    \For{$i=1,\dots,|\cY|$}{
        \If{$\|\by^i-\bx\|\leq c\Delta_k$ and $\left|\text{proj}_{\bS^\perp}\left(\frac{1}{c\Delta_k}(\by^i-\bx)\right)\right|\geq\theta_1$}
        {
            $\bS = \bS \cup \{\by^i-\bx\}$\replace{}{.}\\
            Update $\bS^\perp$ to be an orthonormal basis for $\replacemath{\mathcal{N}}{\Null}([\bs^2 \cdots \bs^{|S|}])$\replace{}{.}\\
        }
        \If{$|\bS| = n + 1$}{
        \textbf{break} (the for loop)\replace{}{.}
        }
    }
    $\bS = [\bs^2, \cdots, \bs^{|\bS|}]$\replace{}{.}\\
    $\bQ = [\bS \hspace{0.5pc} \bS^\perp]$\replace{}{.}\\
    \textbf{Return: } $\bS, \bS^\perp, \bQ$ \\
\end{algorithm2e}

\begin{algorithm2e}[h!]
\caption{A Subspace Variant of \texttt{POUNDers}}
\label{alg:ss_pounders}
\textbf{(Initialization)} Choose algorithmic constants $\eta_1, \eta_2, \Delta_{\max} > 0$
and $0 < \nu_1 < 1 <\nu_2$. \\
Choose horizon $K$ and regularizer $\lambda > 0$. 
Choose initial point $\bx^0\in\Reals^d$ and initial trust-region radius $\Delta_0 \in (0, \Delta_{\max})$. \\
Initialize $\bC_1\gets \lambda \bI_d$, $\bb_1 \gets \bzero\in\Reals^d$, gradient estimate $\bg_1\gets \bzero\in\Reals^d$, memory parameter $M$, average weighting parameter $\mu\in(0,1)$, initial gradient upper bound estimate $U_1$. 
Initialize a bank of points $\cY$ with pairs $(\bx,f(\bx))$ for which $f(\bx)$ is known.\\
\For{$k=1,2,\dots$}
{
\textbf{(Get initial subspace)} Use \Cref{alg:subspace} to obtain $\bS_k$, $\bS_k^\perp$ and $\bQ_k$.\\
\eIf{$\bS_k^\perp \neq \emptyset$}{
\textbf{(Determine one-dimensional sketch from UCB)} Find $\bs_k$ by solving \eqref{eq:ss_pounders_ucb}, $\bS_k\gets [\bS_k, \bs_k]$. \label{line:ucb_part} \\
}
{$\bs_k\gets\emptyset.$}
\textbf{(Obtain additional random sketches)} Choose $p_k\in[0, d - \dim(\span(\bS_k))]$ and call \Cref{alg:random_augment} to obtain $\bQ_k\in\Reals^{d\times p_k}$, $\bS_k\gets [\bS_k, \bQ_k]$.   \label{line:choices}\\
\textbf{(Perform additional function evaluations)} Evaluate $\{f(\bx^k + \Delta_k \bq_i): \bq_i \in \{\bs_k\} \cup \col(\bQ_k)\footnotemark \}$ and update $\cY$. \\
\textbf{(Build model)} Build a model $m_k:\Reals^{\dim(\span(\bS_k))} \to \Reals$ of $f$ valid on the affine subspace defined by $\bx_k$ and $\bS_k$. \label{line:build_model} \\
\textbf{(Solve TRSP)} (Approximately) solve $\displaystyle\min_{\bz: \|\bz\|\leq \Delta_k} m_k(\bz)$ to obtain $\bz_k$. \label{line:trsp}\\
\textbf{(Evaluate new point)} Evaluate $f(\bx_k + \bS_k \bz_k)$ and update $\cY$. \\
\textbf{(Determine acceptance)} Compute $\rho_k\gets \displaystyle\frac{f(\bx_k) - f(\bx_k+\bS_k \bz_k)}{m_k(\bzero) - m_k(\bz^k)}$. \\
 \eIf{$\rho_k\geq\eta_1$}{
$\bx_{k+1}\gets \bx_k + \bS_k \bz_k$.}
{$\bx_{k+1}\gets \bx_k$.}
\textbf{(Trust-region adjustment)} \eIf{$\rho_k\geq\eta_1$}{
\eIf{$\|\nabla m_k(\bzero)\|\geq \eta_2\Delta_k$}{
$\Delta_{k+1}\gets \min\{\nu_2\Delta_k,\Delta_{\max}\}.$}
{$\Delta_{k+1} \gets \nu_1\Delta_k$\replace{}{.}}
}
{
$\Delta_{k+1}\gets\nu_1\Delta_k$.
}
\textbf{(Update UCB estimators)} Update $\bC_{k+1}$ and $\bb_{k+1}$ according to \eqref{eq:new_mean_update}, replacing $r_{j,i}$ with corresponding entries of $\nabla m_j(\bzero)$. \\
$\bg_{k+1}\gets \bC_{k+1}^{-1}\bb_{k+1}$.\\
$U_{k+1} \gets \mu U_k + (1-\mu) \frac{d}{\dim(\span(\bS_k))}\|\nabla m_k(\bzero)\|.$\\
}
\end{algorithm2e} 

\begin{algorithm2e}
\caption{Generating orthogonal random sketches in $\bS^\perp$}
    \label{alg:random_augment}
    \textbf{Input:} Orthogonal basis for subspace $\bS$, number of new directions $p_k$. \\
    Randomly generate $\bA\in\Reals^{d\times p_k}$ from some distribution. \\
    $\bA\gets \bA - \bS\bS^\top \bA$\\
    Perform the QR factorization of $\bA$, and return the first $p_k$ columns of the orthonormal $\bQ$ factor. 
\end{algorithm2e}

\section{Numerical Tests}\label{sec:experiments}

The hypothesis motivating the experiments performed in this section is that, without any sophisticated hyperparameter tuning, employing a sliding window UCB mechanism in addition to a standard randomized generation of $\bS_k$ generally leads to practical gains over a method that  performs only randomized sketches.

The work in this manuscript was motivated primarily by applications where gradients are computationally expensive to compute. Thus we implicitly assume that all linear algebra costs incurred in updating the (inverse) covariance matrix and the vector~$\bb$, as well as the costs of performing multiplications with the inverse covariance matrix in solving \eqref{eq:confidence_subproblem}, are dominated by the cost of computing a (sketched) gradient. 
Thus, to model this sort of computational setting, and for the sake of these experiments, we are  interested solely in maximizing objective function decrease given a fixed budget of oracle accesses. 

In our experiments, and motivated by our discussion of when subspace optimization methods within Framework~1 are most appropriate, 
we implemented and tested reasonably faithful versions of both \Cref{alg:practical} and \Cref{alg:ss_pounders}. 
\Cref{alg:practical}, in both \Cref{line:call_oracle1} and \Cref{line:call_oracle2}, assumes access to an implementation of forward-mode AD to return arbitrary directional derivatives.
For easy reproducibility, however, we chose in our experiments to use a large collection of unconstrained problems from CUTEst \cite{Gould2014}.
Because gradients  in CUTEst are computed analytically and are relatively inexpensive to obtain by design, we simulate a forward AD oracle by computing the full gradient $\nabla f(\bx_k)$,
\footnotetext{$\col(\cdot)$ denotes the operation on matrices that maps a matrix to the set of its columns.}
but only allowing the solver to observe the sketched quantity $\bS_k^\top \nabla f(\bx_k)$, given the solver's choice of $\bS_k$. 
\texttt{SS-POUNDers}, our implementation of \Cref{alg:ss_pounders}, is derivative-free. 

\subsection{Testing \Cref{alg:practical}}\label{sec:testing_sdm}
With our hypothesis and computational setting fixed, we consider two versions of \Cref{alg:practical}.
Both methods have access to an oracle that will compute a sketch of size~$p$. 
The first version of the method, which we refer to simply as \emph{random-only}, is purely randomized; in the $k$th iteration, the random-only method generates $\bS_k\in\Reals^{d\times p}$ as a Gaussian matrix. 
\Cref{line:first} through \Cref{line:last} of \Cref{alg:practical}, that is, all the lines that pertain to generating an upper confidence bound on the gradient, are omitted entirely in the random-only method.  
The second method, which we refer to as \emph{UCB}, does not omit \Cref{line:first} through \Cref{line:last} of \Cref{alg:practical}.
In order to give maximally equal oracle access to the two methods, the same sequence of random Gaussian matrices~$\bS_k$ that the random-only method employs is also given to the UCB method but with the last column of $\bS_k$ removed.  
Thus, both methods receive the same number of one-dimensional sketches in every iteration from the oracle, and the randomized directional information they receive is comparable in that, in the $k$th iteration, $p-1$ of the directional derivatives computed are equivalent across both methods; the difference is that the random-only method effectively chooses a random Gaussian vector as its $p$th direction, while the UCB method solves \cref{eq:confidence_subproblem} to determine its $p$th direction. 

We tested the random-only and UCB variants of \Cref{alg:practical} on every unconstrained problem in the CUTEst \cite{Gould2014} test set with problem dimension less than or equal to~10,000. 
A mild downselection was performed in that any problem that produced a \texttt{NaN} or overflow value over the course of any of our tests was removed from consideration. 

We compare the random-only and UCB variants using a relative performance measure. 
In particular, 
for a given problem, we run both variants for a fixed number of seeds (in these experiments, 10) with a fixed budget of directional-derivative oracle calls on a problem set $\mathcal{P}$.
Because the sequence of incumbent values generated by \Cref{alg:practical} is monotone decreasing, we consider the ratios
\begin{equation}\label{eq:ratio_def}
r_{i,p} := \displaystyle\frac{f(\bx^{rand,i,p}) - f(\bx^{ucb,i,p})}{\max\{f(\bx_0) - f(\bx^{rand,i,p}), f(\bx_0) - f(\bx^{ucb,i,p}), 1\}} \in [-1,1],
\end{equation}
where $i$ indexes the 30 seeds, $p$ indexes the problem set $\mathcal{P}$, $\bx^{rand,i,p}$ denotes the last incumbent of the random-only variant on the $i$th seed applied to problem $p$,  $\bx^{ucb,i,p}$ denotes the last incumbent of the UCB variant on the $i$th seed applied to problem $p$, and $\bx_0$ denotes a common initial point. 
Each box-and-whisker plot in our figures corresponds to a single problem $p$ and illustrates the 30 values of $r_{i,p}$. 
From the definition of~$r_{i,p}$ in \eqref{eq:ratio_def}, we see that the UCB variant performs better at the minimization task  \eqref{eq:minf} on a single run with a fixed seed whenever~$r_{i,p} > 0$. 
We separate our problems by problem size, with the smallest problems (dimension $11\leq d\leq 100$) in \Cref{fig:alg2_100}, slightly larger problems (dimension $101\leq d \leq 1000$) in \Cref{fig:alg2_1000}, and 
the largest problems (dimension $1001\leq d\leq 10000$) in \Cref{fig:alg2_10000}. 
In all our tests, the horizon was chosen as $K=1000$. For a high-dimensional problem with $d=10000$ and $p_k = \lceil 0.001 d \rceil$, this amounts to a total of $10,000$ directional derivatives, that is, the same amount of effort that would be required to compute a single full-space gradient. 
Thus, these tests are meant to simulate a true computationally expensive setting that would preclude the use of full gradients.
Our choice to employ relative performance ratios \eqref{eq:ratio_def} in these tests is thus justified, since they represent relative performance within such a constrained budget. 

We stress that we did not strenuously tune any hyperparameters in our implementation of \Cref{alg:practical} in these experiments. 
Some preliminary experiments helped inform our decisions, but all the runs illustrated here used a regularization parameter~$\lambda = 1/d$, a fixed sketch size $p_k = p$ illustrated in the captions of our figures, memory parameter $M = d / p$, averaging parameter $\mu = 0.8$, backtracking parameter $\beta = 0.5$, and sufficient decrease parameter $\sigma = 10^{-8}$. 
\begin{figure}  \includegraphics[width=.9\linewidth]{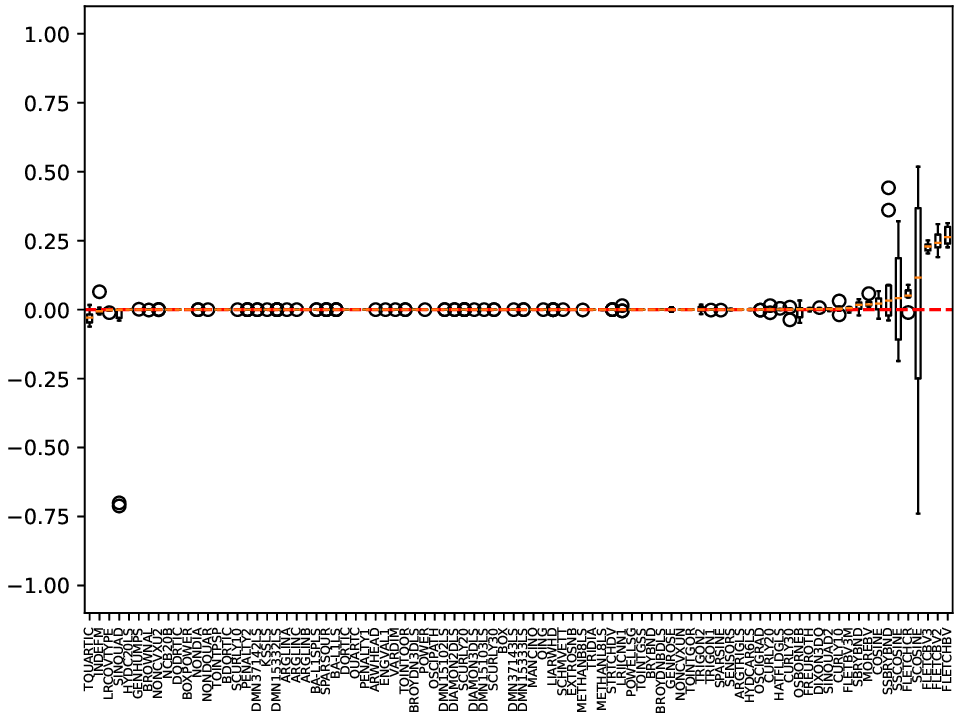}
      \includegraphics[width=.9\linewidth]{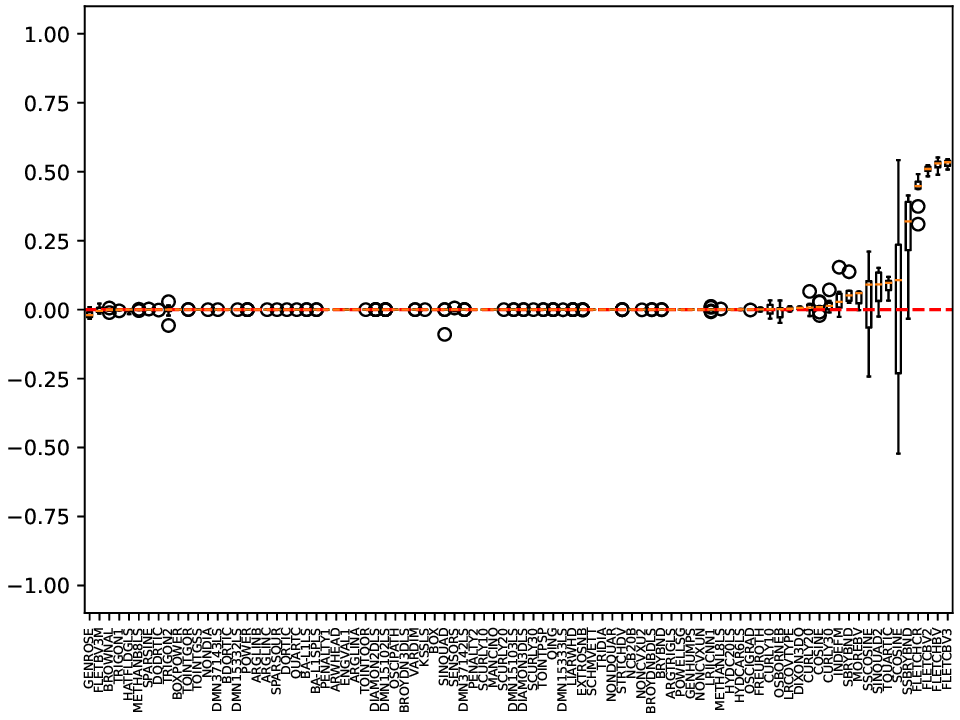}
     \caption{Values of $r_{i,p}$ (see \eqref{eq:ratio_def}) \replace{for}{shown as box-whisker plots on 30 random replications of applying \Cref{alg:practical} to} CUTEst problems of dimension $11\leq d\leq 100$. \textbf{Top: } Results with $p_k = \lceil 0.1 d\rceil$. \textbf{Bottom: } Results with $p_k = \lceil 0.01 d\rceil$.
\label{fig:alg2_100}}
\end{figure}

\begin{figure}
    \includegraphics[width=.9\linewidth]{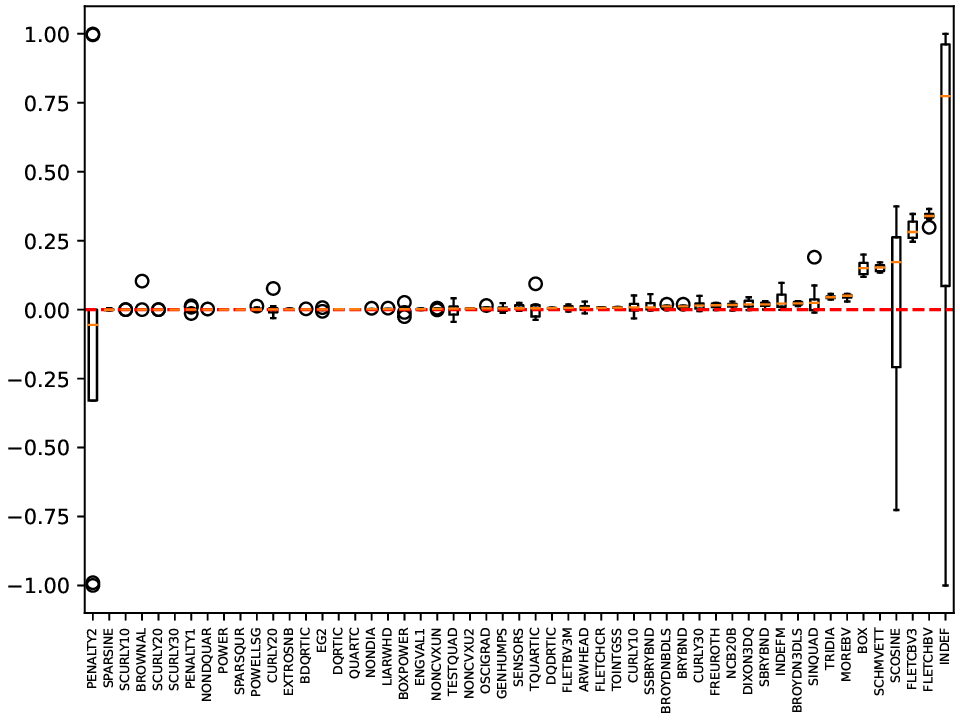}
      \includegraphics[width=.9\linewidth]{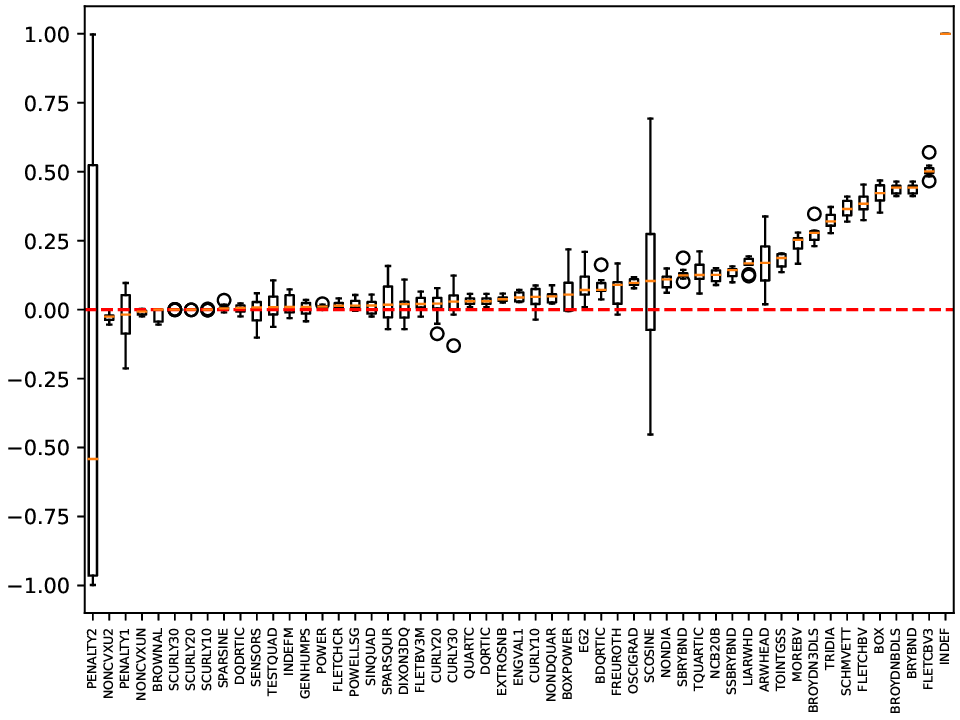}
      \caption{Values of $r_{i,p}$ (see \eqref{eq:ratio_def}) \replace{for}{shown as box-whisker plots on 30 random replications of applying \Cref{alg:practical} to} CUTEst problems of dimension $101\leq d\leq 1000$. \textbf{Top: } Results with $p_k = \lceil 0.01 d\rceil$. \textbf{Bottom: } Results with $p_k = \lceil 0.001 d\rceil$. 
\label{fig:alg2_1000}}
\end{figure}

\begin{figure}
        \includegraphics[width=.9\linewidth]{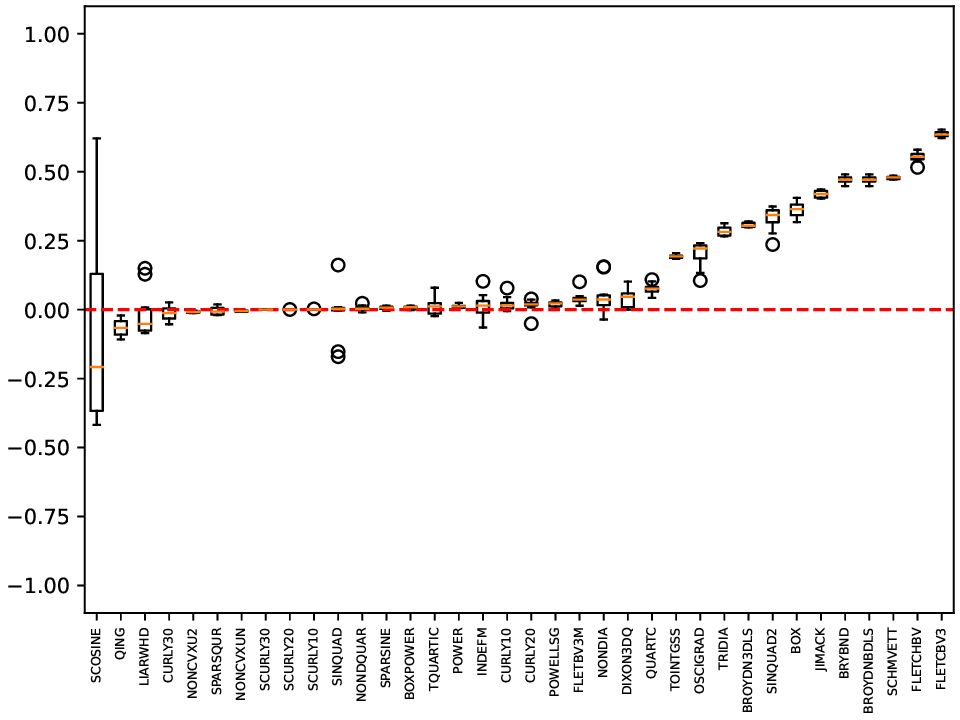}
    \includegraphics[width=.9\linewidth]{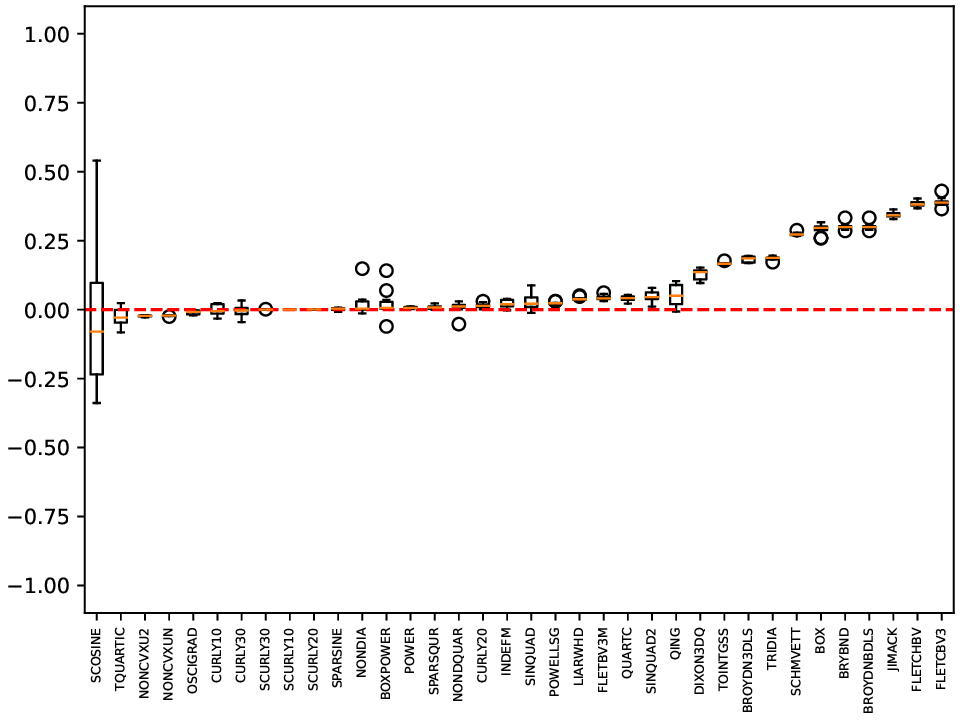}
    \caption{Values of $r_{i,p}$ (see \eqref{eq:ratio_def}) \replace{for}{shown as box-whisker plots on 30 random replications of applying \Cref{alg:practical} to} CUTEst problems of dimension $1001\leq d\leq 10000$. \textbf{Top: } Results with $p_k = \lceil 0.01 d\rceil$. \textbf{Bottom: } Results with $p_k = \lceil 0.001 d\rceil$. 
\label{fig:alg2_10000}}
\end{figure}

At a high level, 
\replace{\Cref{fig:alg2_100}, \Cref{fig:alg2_1000} and \Cref{fig:alg2_10000}}
{\Cref{fig:alg2_100,,fig:alg2_1000,,fig:alg2_10000}}
illustrate what we had hoped to show---across all problem sizes, there is a preference for using UCB over random-only, in the sense that most ratios across all runs are greater than 0.0, with particular exceptions seen in one problem in the $101\leq d\leq 1000$ setting (\texttt{PENALTY2}) and one problem in the $1001\leq d \leq 10000$ setting (\texttt{SSCOSINE}). 
In general, we observe a trend that as the dimension $d$ increases, the utility of UCB increases.
Moreover, for a fixed problem set of similar dimension, UCB is generally more effective when smaller sample sizes $p_k$ are employed.

\subsection{Testing \texttt{SS-POUNDers}}

We tested our implementation of \texttt{SS-POUNDers}, which borrows heavily from subroutines found in the Matlab implementation of \texttt{POUNDers} available in \texttt{IBCDFO} \cite{osti_2382683}. 
We first demonstrate the performance of \texttt{SS-POUNDers} on all 44 of the unconstrained \texttt{YATSOp} \cite{YATSOPCode} problems labeled midscale (i.e., with $98\leq d\leq 125$). 
We note that the \texttt{YATSOp} problems are a Matlab implementation of a particular subset of nonlinear least squares problems from \texttt{CUTEst} \cite{Gould2014}. 
Like the \texttt{POUNDers} software from which \texttt{SS-POUNDers} is derived, the ``default" use case of \texttt{SS-POUNDers} is for such nonlinear least squares problems.
This specification to nonlinear least squares essentially only affects how model gradients and \replace{model}{} Hessians are computed from residual vector evaluations in \Cref{line:build_model} of \Cref{alg:ss_pounders}; the geometry point selection determined by the UCB mechanism in \Cref{line:ucb_part} of \Cref{alg:ss_pounders} is agnostic to this additional nonlinear least squares structure. 

Our choice of testing the so-called midscale problems in \texttt{YATSOp} is partially to demonstrate something perhaps unsurprising that has been previously observed in the literature in sketching-based DFO; see \cite{cartis2023scalable}.
In particular, in the absence of any effective low-dimensionality in an objective function, one should generally expect a full-space optimization method (\texttt{POUNDers}) to outperform a subspace optimization method (\texttt{SS-POUNDers}), provided the performance metric is given by the number of function evaluations (as opposed to, say, wall-clock time) and further provided that a sufficiently large budget is made available.

We tested three variants of \texttt{SS-POUNDers} for illustrative purposes. 
The first variant, which we simply denote ``UCB" in the figure legends, performs \Cref{line:ucb_part} of \Cref{alg:ss_pounders} but effectively skips \Cref{line:choices} of \Cref{alg:ss_pounders} by always setting $p_k=0$. 
Note that this UCB variant, like \texttt{POUNDers}, is a \emph{deterministic} method. 
The second variant, which we denote ``random-only" in the figure legends, skips \Cref{line:ucb_part} of \Cref{alg:ss_pounders} entirely but sets $p_k=\min\{d-\dim(\span(\bS_k)),1\}$ in \Cref{line:choices} of \Cref{alg:ss_pounders}. 
This yields a fully randomized derivative-free subspace  optimization method like several of those discussed in our literature review but specialized to the \texttt{POUNDers} framework. 
Our third variant, denoted ``UCB + random" in the figure legends, performs  \Cref{line:ucb_part} of \Cref{alg:ss_pounders} and also sets $p_k=\min\{d-\dim(\span(\bS_k)),1\}$ in \Cref{line:choices} of \Cref{alg:ss_pounders}. 
This corresponds to the successful UCB variant of \Cref{alg:practical} from \Cref{sec:testing_sdm}. 

We show comparisons in terms of data profiles \cite{JJMSMW09}.
Given a set of problems $\mathcal{P}$ (each problem $prob$ is assumed to have a common initial point $\bx^{prob}_0$) and a set of solvers~$\mathcal{S}$, denote the sets of points evaluated by a solver $sol\in\mathcal{S}$ when solving an instance of problem $prob\in\mathcal{P}$, $\{\bx^{sol, prob}_k\}_{k=1}^{K}$. 
Given a tolerance $\tau\in(0,1)$, we say that a solver~$sol\in\mathcal{S}$ solved a problem $prob\in\mathcal{P}$ to tolerance $\tau$ within $N\leq K$ iterations provided

$$f(\bx^{sol, prob}_N) \leq \tau \left[f(\bx^{prob}_0) - \displaystyle\min_{sol^'\in\mathcal{S}} \displaystyle\min_{k=0,1,\dots,K} f(\bx^{sol^', prob}_k)\right].$$

A data profile then fixes a tolerance $\tau$ and generates a line for each solver $sol\in\mathcal{S}$.
On the $x$-axis, a data profile illustrates the number of budget units expended (in this manuscript, these budget units will always be groups of $d+2$ many function evaluations); and on the $y$-axis, a data profile shows the percentage of problems solved by solver $sol$ to tolerance $\tau$ within the corresponding budget. 
The choice of $d+2$ is because, in the absence of prior function evaluations being provided, \texttt{POUNDers} requires~$d+1$ function evaluations to construct an initial affine model and then requires one additional evaluation to evaluate the trial point that would be suggested by that initial model in the first iteration. 
That is, a function evaluation budget of $d+2$ guarantees  \texttt{POUNDers} can complete one full iteration in the absence of prior evaluations. 
For each problem/solver combination, we ran 10 fixed seeds of each of the randomized methods (the ``random-only" and ``UCB + random" variants), and we treated each seed as its own problem instance. 

\begin{figure}
    \centering
    \includegraphics[width=.99\textwidth]{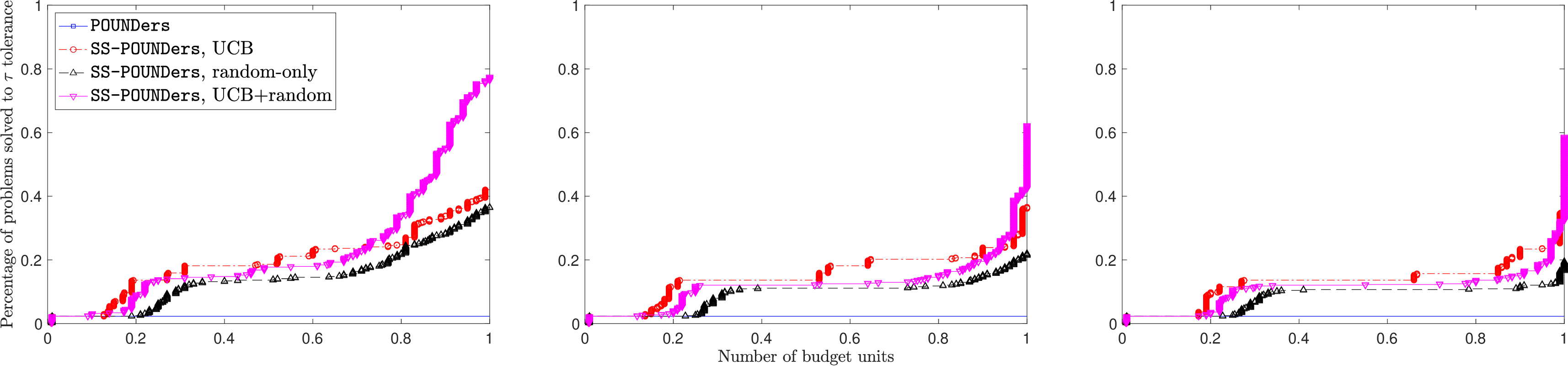}
    \caption{Data profiles comparing variants of \texttt{SS-POUNDers} with \texttt{POUNDers} in the budget-constrained setting of \emph{one} budget unit ($d+2$ function evaluations) on midscale \texttt{YATSOp} problems. Left figure is tolerance $\tau=0.1$, center figure is $\tau=0.01$, and right figure is $\tau=0.001$.\label{fig:yatsop_one_gradient}}
\end{figure}

\begin{figure}
    \centering
    \includegraphics[width=.99\textwidth]{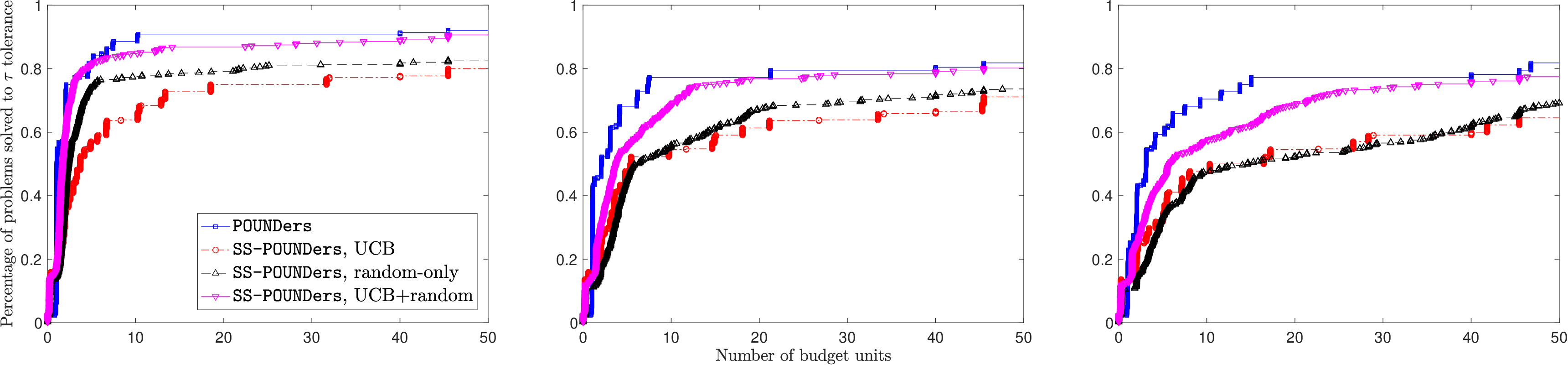}
      \caption{Data profiles comparing variants of \texttt{SS-POUNDers} with \texttt{POUNDers} in the less budget-constrained setting of \emph{fifty} budget units ($50(d+2)$ function evaluations) on midscale \texttt{YATSOp} problems. Left figure is tolerance $\tau=0.1$, center figure is $\tau=0.01$, and right figure is $\tau=0.001$.\label{fig:yatsop_fifty_gradients}}
\end{figure}

Once again, we did not perform any exhaustive hyperparameter tuning before performing these experiments. 
All default parameter settings common to both \texttt{POUNDers} and \texttt{SS-POUNDers} were set to the default parameter settings used in \texttt{POUNDers} in \texttt{IBCDFO}.
All of the parameters related to the UCB mechanism were set identically to those employed in \Cref{sec:testing_sdm}. 

As in the experiments of \Cref{sec:testing_sdm}, we first demonstrate results for a very budget-constrained computational setting, in particular, when only $d+2$ function evaluations---that is, one budget unit---is allotted. 
Unsurprisingly, in this extremely budget-constrained budget, and as illustrated in \Cref{fig:yatsop_one_gradient}, \texttt{POUNDers} performs  poorly compared with all of the \texttt{SS-POUNDers} variants. 
Once the budget is increased to a level where \texttt{POUNDers} is able to identify a reasonable solution ($50\replacemath{*}{}(d+2)$ function evaluations in these experiments), a preference for using \texttt{POUNDers} re-emerges, especially when we demand tighter tolerances of $\tau=0.01$ or $\tau=0.001$; see \Cref{fig:yatsop_fifty_gradients}. 
Remarkably, however, at these tolerances the variant of \texttt{SS-POUNDers} employing both the UCB mechanism and one-dimensional random sketches is not considerably worse than \texttt{POUNDers} in the latter setting. 
From this particular experiment, however, we do see a general preference for the UCB+random variant of \texttt{SS-POUNDers} over the other two variants of \Cref{alg:ss_pounders}. 
Importantly, we see no evidence that one should employ a fully randomized variant of \texttt{SS-POUNDers} over a variant that employs both a UCB mechanism and randomized sketching. 

Next we test a hypothesis made earlier in this manuscript that employing a UCB mechanism should exhibit stronger performance than a full-space method on functions exhibiting low effective dimensionality. 
For this test we  use  the low-dimensional~($2\leq~d\leq 12$) Mor{\'e}--Wild benchmarking set for DFO \cite{JJMSMW09}.
Following the experimental setup  in \cite{cartis2022randomised} and originally proposed in \cite{wang2016bayesian}, 
we specify a dimension $D$ (in the tests illustrated here, $D=100$) and artificially construct objective functions with $D$-dimensional domains starting from the benchmark problems. 
This is done by taking a function $f:\Reals^d\to\Reals$ from the benchmark set and defining a new function~$\bar{f}:\Reals^D\to\Reals$.
Specifically, we first define an intermediate function~$\tilde{f}:\Reals^D\to\Reals$ that acts trivially on the last $D-d$ dimensions of its domain, that is, 
$\tilde{f}(\bx) = f(\left[\bx_1,\dots, \bx_d\right]^\top)$.
We then define $\bar{f} = \tilde{f}(\bQ\bx)$, where $\bQ\in\Reals^{D\times D}$ is a randomly generated orthogonal matrix; this rotation is done to prevent the subspace of low effective dimensionality from being always coordinate-aligned. 
Comparisons of the same variants of \texttt{SS-POUNDers} and the same experimental setup as in the prior set of experiments are shown in \Cref{fig:low_effective_dimensionality}. 
We see that in this setting of low effective dimensionality, subspace optimization methods are clearly superior to the full-space \texttt{POUNDers}. 
Notably, at all three tolerances shown, the data profiles suggest a slight preference for using the deterministic UCB variant.
This observation is reasonable given that there is a constant subspace of variation that the UCB mechanism can learn, and so the subspace method is more likely to thrive when the balance of exploration vs exploitation is tipped to the direction of exploitation, since exploration in the orthogonal subspace defined by the last $D-d$ columns of the random matrix $\bQ$ will never align with $\nabla \bar f(\bx)$. 

\section{Conclusions and Future Work}
In this manuscript we introduced a linear UCB mechanism for dynamically learning subspaces well aligned with gradients in subspace optimization methods of the form presented in Framework~1. 
Subspace optimization methods like these are particularly useful in settings of computationally expensive optimization, where gradient information is prohibitive to obtain or approximate, relative to available computational budgets. 
The linear UCB mechanism is particularly appropriate for subspace optimization methods because it translates naturally to the problem of maximizing a linear function where the reward function (defined by the gradient) changes. 
We implemented two solvers, pseudocode provided in \Cref{alg:practical} and \Cref{alg:ss_pounders}, respectively, for problem settings where arbitrary directional derivatives are computable through a forward-mode AD oracle and for problems where no derivatives are available. 
Our experiments demonstrated the advantage of subspace optimization methods employing a linear UCB mechanism over subspace optimization methods that solely employ randomization.
Our experiments also demonstrated the advantage of subspace optimization methods over full-space methods in settings that are  extremely compute-limited (that is, only one or two full gradients or gradient approximations can be computed within budget) and in settings where objective functions exhibit low effective dimensionality. 

This work leaves open avenues for future research. 
In terms of theory, the regret bounds that we proved using existing techniques from the literature on bandit methods assume that the optimization method employed in Framework~2 results in a sequence of incumbents (and gradients) that are effectively exogenous from the perspective of the linear UCB mechanism. 
Identifying a means to incorporate the optimization dynamics directly into the regret analysis could potentially provide stronger guarantees that further elucidate the advantage of balancing exploration and exploitation via a UCB mechanism over methods that purely randomize subspace selection. 
As identified in the literature review, there is ongoing work to extend subspace optimization methods that fit in Framework~1 to problems exhibiting more structure than the unconstrained optimization problem presented in \eqref{eq:minf}. 
Extending the UCB mechanism to broader classes of problems, in particular constrained problems, could be of practical benefit in problems where subspaces must intersect nontrivially with feasible regions. 
Furthermore, we essentially conducted our regret analysis and proposed algorithms in this paper based  only on first-order optimality guarantees. 
In settings where Hessians can also be sketched or approximately sketched, one may try to employ a bandit mechanism that can incorporate (sketched) curvature information to obtain tighter confidence ellipsoids than those provided by a linear UCB mechanism that simply seeks to estimate the dynamically changing gradient.

\begin{figure}
    \centering
    \includegraphics[width=.99\textwidth]{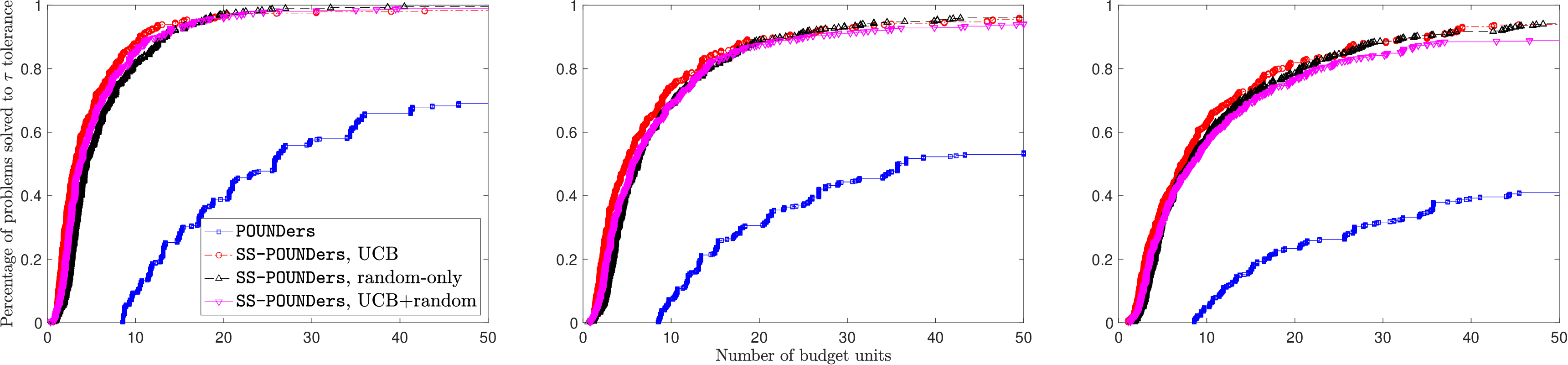}
       \caption{Data profiles comparing variants of \texttt{SS-POUNDers} with \texttt{POUNDers} with fifty budget units ($50(D+2)$ function evaluations) on the artificial problems of low effective dimensionality generated from the Mor{\'e}--Wild benchmarking set. Left figure is tolerance $\tau=0.1$, center figure is $\tau=0.01$, and right figure is $\tau=0.001$.\label{fig:low_effective_dimensionality}}
\end{figure}


\section*{Funding} 
This work was supported in part by the U.S.
Department of Energy, Office of Science, Office
of Advanced Scientific Computing Research, Scientific Discovery through Advanced Computing (SciDAC) Programs through Contract
Nos.\ DE-AC02-06CH11357 and through the FASTMath Institute. 

\section*{Disclosure statement}
The authors report there are no competing interests to declare.

\section*{Data availability statement}
The code used in these experiments will be made available upon reasonable request by contacting the author. 


\begin{appendix}
\section{Proof of \Cref{lem:gradient_error}}
\begin{proof}
    By the definition of the gradient estimate in \cref{eq:gradient_estimate} and the definition of $r_k$ in \cref{eq:sketch_gradient}, 
    $$
    \begin{array}{rl}
    g_k - \nabla f(x_k) 
    & = 
    C_k^{-1}\left[ \displaystyle\sum_{j=\max\{1, k-M-1\}}^{k-1} s_j s_j^\top \nabla f(x_j)\right] - \nabla f(x_k)\\
    & = 
     C_k^{-1}\left[ \displaystyle\sum_{j=\max\{1, k-M-1\}}^{k-1} s_j s_j^\top \nabla f(x_j) - C_k\nabla f(x_k)\right]\\
     &=
     C_k^{-1}\left[
     \left(
     \displaystyle\sum_{j=\max\{1, k-M-1\}}^{k-1} s_j s_j^\top (\nabla f(x_j) - \nabla f(x_k))
     \right)
     - \lambda \nabla f(x_k)
     \right].
    \end{array}
    $$
    By Cauchy--Schwarz inequality, for all $s\in\Reals^d$, 
    \begin{equation}\label{eq:intermediate1}
    |s^\top (g_k - \nabla f(x_k))| \leq  B_k\|s\| + \|s\|_{C_k^{-1}} \|\lambda C_k^{-1} \nabla f(x_k)\|_{C_k} =  B_k\|s\|  + \lambda\|s\|_{C_k^{-1}}\|\nabla f(x_k)\|_{C_k^{-1}},
    \end{equation}
    where we define the \emph{bias term} $B_k$ by
    $$B_k = \left\|C_k^{-1}\left(\displaystyle\sum_{j=\max\{1, k-M-1\}}^{k-1} s_j s_j^\top (\nabla f(x_j) - \nabla f(x_k))
     \right)
     \right\|.$$

    We note in \cref{eq:intermediate1} that 
    because $C_k \succeq \lambda I_d$, 
    we have that 
    $\|\nabla f(x_k)\|_{C_k^{-1}} \leq \lambda^{-1/2} \|\nabla f(x_k)\|$, and so we can simplify to
\begin{equation}\label{eq:intermediate2}
    |s^\top (g_k - \nabla f(x_k))| \leq  
    B_k\|s\|  + \sqrt{\lambda}\|\nabla f(x_k)\|\|s\|_{C_k^{-1}}.
    \end{equation}

    The remainder of the proof involves bounding the bias term.
    Abbreviate $\ell(k) = \max\{1,k-M-1\}$. 
    We first record that
    $$
    \begin{array}{rl}
    B_k =
    \left\|
    C_k^{-1}\left[\displaystyle\sum_{j=\ell(k)}^{k-1} s_j s_j^\top (\nabla f(x_j) - \nabla f(x_k))
     \right]
     \right\|
    & =
    \left\|
    C_k^{-1} \left[
    \displaystyle\sum_{j=\ell(k)}^{k-1} s_js_j^\top \left[
    \sum_{i=j}^{k-1}(\nabla f(x_i) - \nabla f(x_{i+1}))
    \right]
    \right]
    \right\|\\
    & = 
    \left\|
    C_k^{-1} \left[
    \displaystyle\sum_{i=\ell(k)}^{k-1}\left[
    \sum_{j=\ell(k)}^i s_j s_j^\top (\nabla f(x_i) - \nabla f(x_{i+1}))
    \right]    
    \right]
    \right\|\\
    & \leq
    \displaystyle\sum_{i=\ell(k)}^{k-1}\left\|
    C_k^{-1}\left[
    \sum_{j=\ell(k)}^i s_j s_j^\top (\nabla f(x_i) - \nabla f(x_{i+1}))
    \right]
    \right\|\\
    & \leq
     \displaystyle\sum_{i=\ell(k)}^{k-1}\left\|C_k^{-1}
        \sum_{j=\ell(k)}^i s_js_j^\top 
     \right\|
     \|\nabla f(x_i) - \nabla f(x_{i+1})\|.
    \end{array}
    $$  

Abbreviate $A_i := \displaystyle\sum_{j=\ell(k)}^i s_j s_j^\top.$
Observing that $C_k^{-1}$ and $A_i$ need not generally commute, we use the general matrix $2$-norm definition to derive that
$$
\begin{array}{rl}
\left\|C_k^{-1}A_i\right\| 
&=
\displaystyle\sup_{y:\|y\|=1}\sup_{z:\|z\|=1}
\left|
y^\top C_k^{-1}A_i z
\right|
 := y_*^\top C_k^{-1}A_i z_* \\
 & \leq 
 \|y_*\|_{C_k^{-1}} \|A_i z_*\|_{C_k^{-1}}\\
 & \leq 
 \|y_*\|_{C_k^{-1}} \left\|\displaystyle\sum_{j=\ell(k)}^i s_j \|s_j\|\|z_*\| \right\|_{C_k^{-1}}.
\end{array}
$$
Because $\|s_j\|=1$ for all $j$, and because $\|y_*\|_{C_k^{-1}} \leq \lambda^{-1/2}\|y_*\| = \lambda^{-1/2}$, this simplifies to
$$
\begin{array}{rl}
\left\|C_k^{-1}A_i\right\| 
 \leq 
\lambda^{-1/2}\left\|
\displaystyle\sum_{j=\ell(k)}^i s_j
\right\|_{C_k^{-1}}
\leq 
\lambda^{-1/2}\displaystyle\sum_{j=\ell(k)}^i \left\|s_j\right\|_{C_k^{-1}}
& \leq 
\lambda^{-1/2} \displaystyle\sqrt{i - \ell(k) + 1} 
\displaystyle\sqrt{\sum_{j=\ell(k)}^i \| s_j\|^2_{C_k^{-1}}},
\end{array}
$$
where the last step employs Cauchy--Schwarz inequality. 
Finally,
$$
\begin{array}{rl}
\displaystyle\sum_{j=\ell(k)}^i \| s_j\|^2_{C_k^{-1}}
& = 
Tr\left(C_k^{-1}\displaystyle\sum_{j=\ell(k)}^i s_js_j^\top \right)\\
& \leq 
Tr\left(C_k^{-1}\displaystyle\sum_{j=\ell(k)}^i s_js_j^\top\right)
+ \displaystyle\sum_{j=i+1}^{k-1} s_j^\top C_k^{-1} s_j 
+ \lambda \displaystyle\sum_{\ell=1}^d e_i^\top C_k^{-1} e_i \\
& = 
Tr\left(C_k^{-1}\displaystyle\sum_{j=\ell(k)}^i s_js_j^\top\right)
+ Tr\left(C_k^{-1}\displaystyle\sum_{j=1+1}^{k-1} s_js_j^\top\right)
+ Tr\left(C_k^{-1} \lambda \displaystyle\sum_{\ell=1}^d e_i e_i^\top\right)\\
& = Tr(C_k^{-1}(A_{k-1} + \lambda I_d)) = Tr(C_k^{-1} C_k) = Tr(I_d) = d,\\
\end{array}
$$
and so
$$
\begin{array}{rl}
B_k \leq 
\displaystyle\sum_{i=\ell(k)}^{k-1} 
 \displaystyle\sqrt{\frac{d(i - \ell(k) + 1)}{\lambda}}
  \|\nabla f(x_{i+1}) - \nabla f(x_i)\|
& \leq
  \displaystyle\sqrt{\frac{d(M+1)}{\lambda}}
 \displaystyle\sum_{i=\ell(k)}^{k-1} \|\nabla f(x_{i+1}) - \nabla f(x_i)\|.\\
 \end{array}
$$
The claim follows. 
\end{proof}

\section{Proof of \Cref{lem:potential_lemma}}
\begin{proof}
We begin by constructing a set of auxiliary matrices. 
Without loss of generality, suppose that $k/M = B$ for some integer $B$. 
We will consider a partition of $\{1,2,\dots,k\}$ into $B$ consecutive blocks.
For the $b$th of the $B$ blocks and for each $j\in\{1,2,\dots,k\}$, we define the \emph{auxiliary matrix}

$$A_j^b = \lambda I_d + \displaystyle\sum_{i=(b-1)M + 1}^{j-1} s_is_i^\top.$$

We make the key observation that for all $j\in\{(b-1)M + 1,\dots, bM\}$, we have the relation $A_j^b \preceq C_j$; 
this follows since $C_j$ is defined as 
$$C_j = \lambda I_d + \displaystyle\sum_{i=\max\{1, j-M-1\}}^{j-1} s_i s_i^\top;$$  
thus, $C_j$ can be viewed as the sum of $A_j^b$ with, potentially, some additional rank-one matrices. 
Moreover, because both $A_j^b$ and $C_j$ are positive definite matrices, we have the relation
$C_j^{-1} \preceq (A_j^b)^{-1}$, 
and so
$$
\displaystyle\sum_{j=1}^k \|C_{j-1}^{-\frac{1}{2}}s_j\| 
\leq 
\displaystyle\sum_{j=1}^k \|(A_{j-1}^b)^{-\frac{1}{2}}s_j\|.
$$

For all $j\in\{(b-1)M+1,\dots,bM\}$, we can rewrite 
$$A_j^b = A_{j-1}^b + s_js_j^\top  = (A_{j-1}^b)^{\frac{1}{2}}\left(
I_d 
+ (A_{j-1}^b)^{-\frac{1}{2}}s_js_j^\top (A_{j-1}^b)^{-\frac{1}{2}}
\right)
(A_{j-1}^b)^{\frac{1}{2}}.$$
Taking the determinant on both sides and using the permutation property of determinants, we have
$$\det(A_j^b) = \det(A_{j-1}^b)\det\left(
I_d + (A_{j-1}^b)^{-\frac{1}{2}}s_js_j^\top (A_{j-1}^b)^{-\frac{1}{2}}
\right).
$$ 
By a standard result of rank-one updates and using the inequality $1 + y \geq \exp(y/2)$ for $y\in[0,1]$,  we have
$$\det(A_j^b) = \det(A_{j-1}^b)(1 + \|(A_{j-1}^b)^{-\frac{1}{2}}s_j\|^2) 
\geq
\det(A_{j-1}^b)\exp(\|(A_{j-1}^b)^{-\frac{1}{2}}s_j\|^2/2).
$$
Rearranging,
$$
\|(A_{j-1}^b)^{-\frac{1}{2}}s_j\|^2
\leq
2\log\left(\displaystyle\frac{\det(A_j^b)}{\det(A_{j-1}^b)}\right),$$
and so 
$$\displaystyle\sum_{j=(b-1)M+1}^{bM} 
\|(A_{j-1}^b)^{-\frac{1}{2}}s_j\|^2
\leq
2 \displaystyle\sum_{j=(b-1)M+1}^{bM} \log\left(\displaystyle\frac{\det(A_j^b)}{\det(A_{j-1}^b)}\right)
= 2\log\left(\displaystyle\frac{\det(A_{bM}^b)}{\det(A_{(b-1)M+1}^b)}\right).
$$
Notice that by the definition of $A_{bM}^b$ and because each $s_j$ is a unit vector, 
$$\Tr(A_{bM}^b) = \Tr(A_{(b-1)M+1}^b) + \displaystyle\sum_{j=(b-1)M+1}^{bM} \Tr(s_js_j^\top) =\lambda d + M,$$
and so
$\det(A_{bM}^b)\leq (\lambda + (M/d))^d$. 
Thus,
$$
\begin{array}{rl}
\displaystyle\sum_{j=1}^k 
\|C_{j-1}^{-\frac{1}{2}}s_j\|^2
=
\displaystyle\sum_{b=1}^{k/M} 
\displaystyle\sum_{j=(b-1)M+1}^{bM}
\|C_{j-1}^{-\frac{1}{2}}s_j\|^2
& \leq 
\displaystyle\sum_{b=1}^{k/M} 
\displaystyle\sum_{j=(b-1)M+1}^{bM}
\|(A_{j-1}^b)^{-\frac{1}{2}}s_j\|^2\\
& \leq
\displaystyle\sum_{b=1}^{k/M}
2\log\left(\left(\displaystyle\frac{\lambda +M/d}{\lambda} \right)^d\right) \\
& = 
\displaystyle\frac{2kd}{M}\log\left(1 + \displaystyle\frac{M}{\lambda d}\right).
\end{array}
$$

\end{proof}
    \end{appendix}

\framebox{\parbox{\columnwidth}{The submitted manuscript has been created by UChicago Argonne, LLC, Operator of Argonne National Laboratory (`Argonne'). Argonne, a U.S. Department of Energy Office of Science laboratory, is operated under Contract No. DE-AC02-06CH11357. The U.S. Government retains for itself, and others acting on its behalf, a paid-up nonexclusive, irrevocable worldwide license in said article to reproduce, prepare derivative works, distribute copies to the public, and perform publicly and display publicly, by or on behalf of the Government.  The Department of Energy will provide public access to these results of federally sponsored research in accordance with the DOE Public Access Plan. \url{http://energy.gov/downloads/doe-public-access-plan}.}}
\end{document}